\def\section{\@startsection{section}{1}\z@{.9\linespacing\@plus\linespacing}%
  {.7\linespacing} {\fontsize{13}{15}\selectfont\scshape\centering}}
\def\paragraph{\@startsection{paragraph}{4}%
  \z@{0.3em}{-.5em}%
  {$\bullet$ \ \normalfont\itshape}}
\newtheorem{theo}{Theorem}[section]
\newtheorem{prop}[theo]{Proposition}
\newtheorem{lem}[theo]{Lemma}
\newtheorem{conj}[theo]{Conjecture}
\theoremstyle{definition}
\theoremstyle{remark}
\newtheorem{rem}[theo]{Remark}
\newcommand\got[1]{{\bm{\mathfrak{#1}}}}
\definecolor{gr}{rgb}   {0.,   0.69,   0.23 }
\definecolor{bl}{rgb}   {0.,   0.5,   1. }
\definecolor{mg}{rgb}   {0.85,  0.,    0.85}
\definecolor{yl}{rgb}   {0.8,  0.7,   0.}
\definecolor{webred}{rgb}{0.75,0,0}
\definecolor{webgreen}{rgb}{0,0.75,0}
\renewcommand{\d}{\, {\rm d}}
\newcommand{\Z}{\mathbb{Z}}
\newcommand{\N}{\mathbb{N}}
\newcommand{\R}{\mathbb{R}}
\newcommand{\Rp}{\R_{+}}
\newcommand{\spectre}{\lambda}
\newcommand{\spec}{\got{S}}
\newcommand{\dom}{\operatorname{Dom}}
\newcommand{\curl}{\operatorname{curl}}
\newcommand{\gq}{\got{q}}
\newcommand{\cF}{{\mathcal{F}}}
\newcommand{\bB}{{\bf B}}
\newcommand{\bA}{{\bf A}}
\newcommand{\sign}{\rm sign}
\newcommand{\dgG}{\got{\ell}}
\newcommand{\dg}{\got{h}}
\newcommand{\dgN}{\got{h}^{ \rm N}}
\newcommand{\dgD}{\got{h}^{ \rm D}}
\newcommand{\mudg}{\mu}
\newcommand{\mudgN}{\mu^{\rm N}}
\newcommand{\mudgD}{\mu^{\rm D}}
\newcommand\Wedge{\mathcal{W}}
\newcommand{\Lunid}{\got{g}}
\newcommand{\LunidT}{\hat{\Lunid}}
\newcommand{\Acyl}{{\bf \mathcal{A}}}
\newcommand{\Bcyl}{{\bf \mathcal{B}}}
\title[Lowest energy of a magnetic hamiltonian with an axisymmetric potential]{On the lowest energy of a 3D magnetic hamiltonian with axisymmetric potential}
\author{Nicolas Popoff}
\address{Laboratoire IRMAR, UMR 6625 du CNRS, Campus de Beaulieu, 35042 Rennes cedex, France
\\
 {\it E-mail address:} nicolas.popoff@univ-rennes1.fr}
\date{\today}
\begin{document}
\begin{abstract}
We study the bottom of the spectrum of a magnetic hamiltonian with axisymmetric potential in $\R^3$. The associated magnetic field is planar, unitary and non-constant. The problem reduces to a 1D family of singular Sturm-Liouville operators on the half-line. We study the associated band functions and we compare it to the ``de Gennes" operators arising in the study of a 2D hamiltonian with monodimensional, odd and discontinuous magnetic field. We show in particular that the lowest energy is higher in dimension 3.
\end{abstract} 
\maketitle

\section{Introduction}

\subsection{Description of the 2D model}
The action of a spatially inhomogeneous magnetic field on a two-dimensio\-nal electron gas has been the focus of several researches in the past decades (see \cite{PeetVas93} and \cite{Nal} for example). Indeed a perpendicular magnetic field (modeled be a 2D scalar vector field) modifies the transport properties of the electron gas (see \cite{PeetMat93}). 
The variation of the magnetic field can induce a quantum transport called ``edge current" (see \cite{HorSmi02} for a physical overview). The case of a unidimensional and non-decreasing magnetic field is described by the Iwatsuka model (see \cite{Iwa85} for the original article and \cite{DomGerRai} for more recent results). The perturbation by a unidimensional non-decreasing electric potential is studied in \cite{BruMirRai11} and \cite{dBiePu99} for example. These studies on elementary geometries help to understand the quantum hall effect.

In \cite{PeetRej}, the authors study among others the case of the magnetic field $B_{0}$ defined on $\R^2$ by $B_{0}(x,y)=\sign(x)$. Let $\bA_{0}(x,y):=|x|$ be a magnetic potential satisfying $\curl \bA_{0}=B_{0}$ and 
\begin{equation}
\label{D:hamiltonien2d}
H_{0}:=(-i\nabla-\bA_{0})^2=D_{x}^2+(D_{y}-|x|)^2 \ , \quad (x,y)\in \R^2
\end{equation}
the associated hamiltonian with $D=-i\partial$. In \cite{PeetRej} the formal spectral analysis of the hamiltonian $H_{0}$ brings the transport properties of a 2D electron gas submitted to the magnetic field $B_{0}$ along the singularity of the magnetic field. Physical arguments show that the classical trajectories correspond to the so-called {\it snake orbits} (see \cite{PeetRej02} and \cite{PeetRej}). Mathematical properties of the hamiltonian $H_{0}$ for small electric perturbations are studied in \cite{HisSoc} (see also \cite{BruMirRaik13} for a related hamiltonian on a half-plane).  

We denote by $\spec(\mathcal{L})$ the spectrum of a self-adjoint operator $\mathcal{L}$. Let 
 \begin{equation}
\label{D:theta0}
\Theta_{0}:=\inf \spec(H_{0})
\end{equation}
 be the bottom of the spectrum of the operator $H_{0}$. This spectral quantity has been introduced in \cite{SaGe63} for a problem coming from the modeling of the phenomenon of ``surface superconductivity" (see section \ref{S:superconductivity}). The study of the spectrum of $H_{0}$ leads to the 1D parameter family of operators pencil 
\begin{equation}
\label{D:dgtau}
\dg_{0}(\tau):=-\partial_{x}^2+(|x|-\tau)^2, \quad x\in \R
\end{equation}
 where $\tau \in \R$ is the Fourier variable dual to $y$. These operators are known as the {\it de Gennes} operators (see Subsection \ref{SS:deGennes} for references and former results). The eigenvalues of such an operator family seen as functions of $\tau$ are often called ``band functions" or ``dispersion curves". Their analysis brings the spectral properties of the hamiltonian $H_{0}$.

\subsection{Problematic and description of the 3D model}
The aim of this article is to study the bottom of the spectrum of a hamiltonian associated to a particular planar inhomogeneous magnetic field of $\R^3$ whose associated magnetic potential is axisymmetric. Let us denote by $(r,\theta,z)$ the cylindrical coordinates of $\R^3$. In the case where the magnetic potential has the shape $\bA(r,\theta,z)=(0,0,a(r))$, the associated magnetic field is planar and writes $\bB(r,\theta,z)=b(r)(-\sin\theta,\cos\theta,0)$ with $b(r)=a'(r)$. Its field lines are circles centered at the origin. Under general assumptions on the function $b$ the classical trajectories of a particle in such a magnetic field are described in \cite[Section 4]{Yaf03}. The spectrum and the scattering properties of the hamiltonian $H_{\bA}:=(-i\nabla-\bA)^2$ associated to such magnetic fields are studied in \cite{Yaf03} and \cite{Yaf08}. The particular case of a magnetic field created by an infinite rectilinear current in the $z$ direction is studied in \cite{Yaf03}: in that case $b(r)=r^{-1}$. The spectrum of $H_{\bA}$ is the half-line $\R_{+}$ and the band functions are decreasing from $+\infty$ to 0. In \cite{Yaf08}, more general magnetic hamiltonians with axisymmetric potentials are described and the author gives conditions for the spectrum of $H_{\bA}$ to be the half-line $\R_{+}$. In \cite[Section 4]{Yaf08}, the particular case $b(r)=1$ is treated. The author shows that the band functions associated to axisymmetric functions of $\R^3$ loose their monotonicities and he deduces that the bottom of the spectrum of $H_{\bA}$ is positive. In this article we study in details the bottom of the spectrum of the magnetic hamiltonian for the case $b(r)=1$ and we make a comparison with the 2D hamiltonian defined in \eqref{D:hamiltonien2d}. We introduce a new operator pencil that can be seen as a 2D version of the de Gennes operator defined in \eqref{D:dgtau}.

We present here the magnetic hamiltonian for the case $b(r)=1$ and the associated magnetic potential $a(r)=r$. In the cartesian coordinates of $\R^3$ the magnetic potential writes $$\Acyl(x,y,z):=(0,0,\sqrt{x^2+y^2}) \ . $$ The associated magnetic field $\Bcyl:=\curl \Acyl$ satisfies $\Bcyl(x,y,z)=(\sin\theta,-\cos\theta,0)$. This magnetic field is unitary and non-constant. The restriction of the magnetic field $\Bcyl$ to a plane of the form $\{y=ax\}$ with $a\in\R$ has the shape of the magnetic field $B_{0}$ associated to the hamiltonian \eqref{D:hamiltonien2d}. Let
\begin{equation}
\label{D:Aaxisym}
H_{\Acyl}:=(-i\nabla-\Acyl)^2=D_{x}^2+D_{y}^2+(D_{z}-\sqrt{x^2+y^2})^2
\end{equation}
be the hamiltonian associated to the magnetic field $\Bcyl$ acting on $L^2(\R^3)$ and 
\begin{equation}
\label{}
\Xi_{0}:=\inf \spec\left(H_{\Acyl} \right)
\end{equation}
its lowest energy. We know from \cite{Yaf08} that $\spec(H_{\Acyl})=[\Xi_{0},+\infty)$. One of our goals is to compare $\Xi_{0}$ with the lowest energy $\Theta_{0}$ of the hamiltonian $H_{0}$ defined in \eqref{D:hamiltonien2d}. Let $\cF_{z}$ be the partial Fourier transform in the $z$-variable. We have the direct integral decomposition (see \cite{ReSi78}):
\begin{equation}
\label{E:Fourier2}
\cF_{z}^{*}H_{\Acyl}\cF_{z} =\int_{\tau\in\R}^{\bigoplus}  \dgG(\tau)\d \tau
\end{equation}
with 
\begin{equation}
\label{D:dgG}
\dgG(\tau):=-\Delta_{x,y}+(\|(x,y)\|-\tau)^2, \quad (x,y)\in \R^2 
\end{equation}
where $\|\cdot\|$ denotes the euclidean norm of $\R^2$. The operator $\dgG(\tau)$ has compact resolvent and we denote by $\lambda_{1}(\tau)$ its first eigenvalue. This operator can be seen as a 2D version of the operator $\dg_{0}(\tau)$ arising in the study of $H_{0}$. Using \eqref{E:Fourier2} we have the fundamental relation
$$\Xi_{0}=\inf_{\tau\in\R} \lambda_{1}(\tau) \ . $$
The restriction of $\dgG(\tau)$ to axisymmetric functions reduces the problem to the singular 1D operator $\Lunid(\tau)$ introduced in Subsection \ref{SS:reductionto1D}. We denote by $(\zeta_{n}(\tau))_{n\in \N^{*}}$ the spectrum of this operator and we have $\zeta_{1}(\tau)=\lambda_{1}(\tau)$. We are interested in the description of the first band function $\zeta_{1}$ and of its infimum $\Xi_{0}$. We are also interested in comparisons with the spectral quantities associated to the operator $\dg_{0}(\tau)$ coming from the study of the hamiltonian $H_{0}$ defined in \eqref{D:hamiltonien2d}.

\subsection{Connection with superconductivity}
\label{S:superconductivity}
The modelization of the superconductivity phenomenon leads to study the minimizers of the Gainzburg-Landau functional. For a strong external magnetic field the superconductivity phenomenon is destroyed. The linearization of the Gainzburg-Landau functional in that case leads to study the magnetic Laplacian with the natural Neumann boundary conditions  (see \cite{GiPh99}). This operator is denoted by $H_{\bA, \, \Omega}$ where $\bA$ is the magnetic potential and $\Omega\subset \R^3$ is the domain. The bottom of its spectrum is denoted by $\spectre(\bB, \Omega)$ since it depends only of the magnetic field $\bB:=\curl \bA$. The critical value of the magnetic field for which the superconductivity disappears in a type II superconductor $\Omega$ is linked to $\spectre(\bB, \Omega)$ (see \cite{FoHe3} for example and \cite{FouHel10} for more references). This gives an important motivation for the comprehension of the behavior of $\spectre(\bB, \Omega)$ for large values of $\bB$. For $x\in \overline{\Omega}$ we denote by $\Pi_{x}$ the tangent cone to $\Omega$ at the point $x$ and $\widetilde{\bB}_{x}:=|\bB(x)|^{-1}\bB(x)$ the normalized magnetic field frozen at $x$. Ones should expect that $\lambda(\bB,\Omega)$ behaves like $\inf_{x\in \overline{\Omega}}|\bB(x)| \lambda(\widetilde{\bB}_{x},\Pi_{x})$ for large magnetic fields, indeed all the known asymptotics of $\lambda(\bB,\Omega)$ have shown this structure. An ongoing work (\cite{BoDauPop13B}) is in progress to get the asymptotics for general corner domains and non-vanishing regular magnetic fields. 

In the perspective to determine the asymptotics of $\lambda(\bB,\Omega)$ for large magnetic fields, it is crucial to have comparisons between all the possible values of $\lambda(\widetilde{\bB}_{x},\Pi_{x})$ for $x\in \overline{\Omega}$. When the boundary of $\Omega$ is regular, the tangent cones $\Pi_{x}$ are either spaces or half-spaces. The spectral model quantity $\lambda(\widetilde{\bB}_{x},\Pi_{x})$ is minimal and equal to $\theta_{0}$ in the case where $\Pi_{x}$ is a half-space with the magnetic field tangent to the boundary (see \cite{LuPan00}). In the case where the boundary of the domain has an edge of opening $\alpha$, it is necessary to study the Neumann magnetic Laplacian on a new model domain: the infinite wedge of opening $\alpha$ denoted by $\Wedge_{\alpha}$. First studies of this operator are presented in \cite{Pan02}, \cite{Bon06} and \cite{Pof13T} for particular geometries. Let $\bB$ be a constant unitary magnetic field. We denote by $b_{\perp}$ the component of $\bB$ orthogonal to the plane of symmetry of the wedge. If $b_{\perp}\neq0$, the magnetic Laplacian $b_{\perp}^{-1}H_{\bA, \, \Wedge_{\alpha}}$ degenerates formally toward the operator $H_{\Acyl}$ (defined in \eqref{D:Aaxisym}) when the opening angle $\alpha$ goes to 0. A formal analysis and several numerical computations show that $\spectre(\bB,\Wedge_{\alpha})$ seems to converge to $b_{\perp}\Xi_{0}$ when the opening angle $\alpha$ goes to 0 (see \cite[Chapter 6]{Popoff}). Therefore the comparison between $b_{\perp}\Xi_{0}$ and the spectral model quantities associated to the points of the regular boundary of $\Omega$ will brings the asymptotics of the first eigenvalue of the magnetic Laplacian on a domain with an edge of small opening. In this article we prove $\Theta_{0}<\Xi_{0}$. An application of the comparison between regular and singular model problems can be found in \cite{PopRay12}. The semi-classical Laplacian with a constant magnetic field in a domain with a curved edge (a lens) is studied. The authors make an assumption on a 2D band function related to the conjecture \ref{C:} and use the tools of the semi-classical analysis to provide complete expansion of the eigenvalues of the magnetic Laplacian on the lens.

\subsection{Contents and main results}
 In Section 2 we reduce the problem to a family of singular 1D Sturm-Liouville operators $(\Lunid(\tau))_{\tau\in\R}$ on the half-line. We study the eigenvalues $\zeta_{n}(\tau)$ of this 1D operators: we give a two-terms asymptotics when the Fourier parameter gets large and we provide an upper bound for the minimum $\Xi_{0}$. In Section 3 we give a formula for the derivative of $\zeta_{n}(\tau)$ with respect to $\tau$ and we use it to show that $\Theta_{0}<\Xi_{0}$. We also give a criterion to characterize the minima of $\zeta_{1}(\tau)$. In Annex \ref{S:num} we give numerical computations of $\zeta_{1}(\tau)$.

\section{Description of the 1D operators}
\subsection{The de Gennes operator}
\label{SS:deGennes}
We first recall known results about the de Gennes operator arising in the study of the hamiltonian $H_{0}$ defined in \eqref{D:hamiltonien2d}. Let $\cF_{y}$ be the partial Fourier transform in the $y$-variable. We have the following direct integral decomposition 
\begin{equation}
\label{FdecdG}
\cF_{y}^{*} H_{0}\cF_{y}:=\int_{\tau\in \R}^{\bigoplus}\dg_{0}(\tau) \d \tau  
\end{equation}
where $\dg_{0}(\tau)$ is defined in \eqref{D:dgtau}. For all $\tau \in \R$ the operator $\dg_{0}(\tau)$ has compact resolvent and we denote by $\mudg_{n}(\tau)$ its $n$-th eigenvalue. Let $\dgN_{0}(\tau)$ (resp. $\dgD_{0}(\tau)$) be the operator $\partial_{x}^2+(x-\tau)^2$ acting on $L^2(\R_{+})$ with Neumann (resp. Dirichlet) boundary condition in $x=0$. We denote by $\mudgN_{n}(\tau)$ (resp. $\mudgD_{n}(\tau)$) its $n$-eigenvalue. We have (see \cite{FouHel10}) for all $n\geq1$ that $\mudg_{2n-1}(\tau)=\mudgN_{n}(\tau)$ and $\mudg_{2n}(\tau)=\mudgD_{n}(\tau)$. When $\tau$ goes to $+\infty$, $\mudgN_{n}(\tau)$ is exponentially close to the Landau level $2n-1$ (see \cite{FouHePer11}):
\begin{equation}
\label{E:limtaugranddG}
\forall n\geq1, \exists C>0, \exists \tau_{0}, \forall \tau \geq \tau_{0}, \quad |\mudgN_{n}(\tau)-(2n-1)| \leq C e^{-\tau^2/2} 
\end{equation}
A two-terms asympotics is computed formally in \cite{PeetRej}. The more precise following expansions are rigorously proved in \cite[Section 1.5]{Popoff}: 
\begin{equation}
\label{dgtunD}
\mudgD_{n}(\tau)=2n-1+\frac{2^{n}}{(n-1)!\sqrt{\pi}}\tau^{2n-1}e^{-\tau^2}\left(1-\frac{n^2-n+1}{2\tau^2}+O\left(\frac{1}{\tau^4}\right)\right) 
\end{equation}
and
\begin{equation}
\label{dgtunN}
\mudgN_{n}(\tau)=2n-1-\frac{2^{n}}{(n-1)!\sqrt{\pi}}\tau^{2n-1}e^{-\tau^2}\left(1-\frac{n^2-n-1}{2\tau^2} +O\left(\frac{1}{\tau^4}\right)\right) \ . 
\end{equation}
Let $u_{n,\tau}$ be a normalized eigenfunction of $\dgN_{0}(\tau)$ associated to $\mudgN_{n}(\tau)$. Using the technics from \cite{DauHe93} and \cite{BolHe93}, it is known that 
\begin{equation}
\label{F:DH93}
(\mudgN_{n})'(\tau)=(\tau^2-\mudgN_{n}(\tau))u_{n,\tau}^2(0)
\end{equation}
and that there exists $\xi_{0}^{n}\in \R$ such that $\tau \mapsto \mudgN_{n}(\tau)$ is decreasing on $(-\infty,\xi_{0}^{n})$ and increasing on $(\xi_{0}^{n},+\infty)$. Moreover the unique minimum of $\mudgN_{n}$ is non-degenerate and we have $\Theta_{0}=\inf_{\tau}\mudgN_{1}(\tau)$. If we denote by $\xi_{0}:=\xi_{0}^{1}$, \eqref{F:DH93} provides $\xi_{0}^2=\Theta_{0}$. Using \eqref{FdecdG} we get $\inf \spec(H_{0})=\Theta_{0}$. Numerical computations (see \cite{SaGe63}, \cite{Cha94} or \cite{Bo08} for a more rigorous analysis) show that $(\xi_{0},\Theta_{0})\approx (0.7682,0.5901)$.

\subsection{Reduction to a 1D problem}
\label{SS:reductionto1D}
We reduce the study of the first eigenvalue of $\dgG(\tau)$ to a 1D singular Sturm-Liouville operator on a weighted space. In the polar coordinate $(r,\phi)$ the operator $\dgG(\tau)$ defined in \eqref{D:dgG} writes 
$$-\partial_{r}^2-\frac{1}{r}\partial_{r}-\frac{1}{r^2}\partial_{\phi}^2+(r-\tau)^2 \ , \quad (r,\phi)\in \R_{+}\times (-\pi,\pi) \ . $$
The eigenfunctions associated to the first eigenvalue are axisymmetric therefore we restrict the operator to the functions that does not depend on the variable $\phi$. In other words we restrict our study to the case where the magnetic quantum number $m$ is equal to 0.

 Let $L^2_{r}(\R_{+})$ be the space of the functions squared integrable on the half axis $\R_{+}$ for the weight $r \d r$.We denote by
$$\langle u,v \rangle_{L^2_{r}(\R_{+})}:= \int_{\R_{+}}u(r) v(r) r \d r $$
the scalar product associated to $L^2_{r}(\R_{+})$. Let $B_{r}^1(\R_{+}):=\{ u\in L^2_{r}, u'\in L^2_{r}(\R_{+}), ru\in L^2_{r}(\R_{+}) \} \ . $ We define the operator 
$$\Lunid(\tau)=-\partial_{r}^2-\frac{1}{r}\partial_{r}+(r-\tau)^2$$
on the domain
$$\dom(\Lunid(\tau))=\{u\in B^1_{r}(\R_{+}), u''\in L^2_{r}(\R_{+}), \frac{1}{r}u' \in L^2_{r}(\R_{+}), r^2u \in L^2_{r}(\R_{+}), (ru'(r))_{|r=0}=0 \} \ . $$
The operator $\Lunid(\tau)$ is unitary equivalent to the restriction of $\dgG(\tau)$ to axisymmetric functions of $\R^2$. The form domain of $\Lunid(\tau)$ is $B^1_{r}(\R_{+})$ and the associated quadratic form is
$$\gq_{\tau}(u):=\int_{\R_{+}}\left(|u'(r)|^2+(r-\tau)^2|u(r)|^2 \right) r \d r \ .$$

Using the results from \cite{BolCa72} we get that the operator $\Lunid(\tau)$ has compact resolvent and we denote by $\zeta_{n}(\tau)$ its $n$-th eigenvalue. We have $\zeta_{1}(\tau)=\lambda_{1}(\tau)$ and
\begin{equation}
\label{E:relationfond}
\Xi_{0}=\inf_{\tau\in \R} \zeta_{1}(\tau) \ ,
\end{equation}
moreover the $(\zeta_{n}(\tau))_{n \geq 0}$ are the eigenvalues of $\dgG(\tau)$ which have axisymmetric eigenfunctions.

\subsection{Elementary results about the spectrum of the 1D operator}
The boundary value problem associated to the eigenvalue $\zeta_{n}(\tau)$ is the following: 
\begin{subnumcases}{}
\label{equation-de-Fuchs}
-r u''(r)-u'(r)+r(r-\tau)^2u(r)=r\zeta_{n}(\tau)u(r) \ , \quad r>0 \ , 
\\
\label{condition-aux-limites-Fuchs}
ru'(r)_{|r=0}=0 \ .
\end{subnumcases}
The EDO \eqref{equation-de-Fuchs} is singular and the point $r=0$ is regular singular, therefore using Fuchs theory for EDO's and the boundary condition \eqref{condition-aux-limites-Fuchs} we get (see \cite[Proposition 6.4]{Popoff} for more details and \cite[Lemma 4.1]{Yaf08} for a more general case):
\begin{prop}
\label{P:Fuchs}
For all $\tau\in \R$ and for all $n \geq 1$, $\zeta_{n}(\tau)$ is a simple eigenvalue of $\Lunid(\tau)$. We denote by $z_{n,\tau}$ an associated eigenfunction. The function $z_{n,\tau}$ is the restriction on $\R_{+}$ of an analytic function on $\R$, moreover it satisfies the Neumann boundary condition 
\begin{equation}
\label{E:vpneumann}
z_{n,\tau}'(0)=0 \ .
\end{equation}
\end{prop}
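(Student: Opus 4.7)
The plan is to recast \eqref{equation-de-Fuchs} as the linear second-order ODE
\[
u''(r)+\frac{1}{r}u'(r)+\bigl(\zeta_n(\tau)-(r-\tau)^2\bigr)u(r)=0,\qquad r>0,
\]
and invoke Frobenius/Fuchs theory at the regular singular point $r=0$. The indicial equation is $\rho^2=0$, whose double root furnishes a fundamental pair $(u_1,u_2)$ with $u_1(r)=\sum_{k\geq 0}a_k r^k$ analytic at the origin and $u_2(r)=c\,u_1(r)\log r+\sum_{k\geq 0}b_k r^k$, where the logarithmic coefficient $c$ is nonzero. Because $r=0$ is the only finite singular point and the remaining coefficients of the ODE are entire, the Frobenius series for $u_1$ has infinite radius of convergence, so $u_1$ will automatically extend to an entire function, providing the claimed analyticity on all of $\R$.

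I would next inject $u_1(r)=\sum a_k r^k$ into the ODE and extract the lowest recurrences. The coefficient of $r^{-1}$ vanishes identically (no constraint on $a_0$), and after normalizing $a_0=1$ one has $u_1(0)=1$; the coefficient of $r^0$ collapses to $a_1=0$, which is precisely the Neumann boundary condition $u_1'(0)=0$ asserted in \eqref{E:vpneumann}. Higher-order relations then determine the $a_k$ for $k\geq 2$ uniquely.

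For simplicity, the strategy is to show that the boundary condition $(r u'(r))_{|r=0}=0$ singles out the line $\R\,u_1$ inside the two-dimensional space of solutions to the ODE. A direct computation yields $r u_2'(r)=c\,r u_1'(r)\log r+c\,u_1(r)+o(1)$ as $r\to 0^{+}$, with limit $c\,u_1(0)=c\neq 0$; hence no combination $\alpha u_1+\beta u_2$ with $\beta\neq 0$ can satisfy the boundary condition, whereas $r u_1'(r)\to 0$ by continuity of $u_1'$. Any eigenfunction of $\Lunid(\tau)$ lies in $\dom(\Lunid(\tau))$ and therefore satisfies the boundary condition, so it must be proportional to $u_1$, which gives simplicity.

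The step I expect to require the most attention is verifying that the logarithmic coefficient $c$ is genuinely nonzero in our case, since this nonvanishing is exactly what rules out the logarithmic solution and forces simplicity. The cleanest route is a reduction-of-order construction, $u_2(r)=u_1(r)\int^{r}\bigl(s\,u_1(s)^2\bigr)^{-1}\,ds$, whose integrand has a simple pole at $s=0$ with residue $u_1(0)^{-2}=1$, forcing an explicit $\log r$ term with nonzero coefficient. The auxiliary input $u_1(0)\neq 0$ is itself guaranteed by the recurrence, since setting $a_0=0$ would propagate $a_k=0$ for all $k$ and yield the trivial solution.
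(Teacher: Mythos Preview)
Your argument is correct and follows precisely the Fuchs/Frobenius route that the paper invokes (the paper itself only sketches this and defers to outside references for the details). One small bookkeeping slip: in the ODE $u''+\tfrac{1}{r}u'+(\zeta_n(\tau)-(r-\tau)^2)u=0$ the coefficient of $r^{-1}$ is exactly $a_1$, so it is that equation (equivalently the $r^0$ coefficient after multiplying through by $r$) which forces $a_1=0$, not the $r^0$ coefficient of the original ODE; the substance of your argument is unaffected.
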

Since the form domain of $\Lunid(\tau)$ does not depend on $\tau$, we deduce from Kato's theory (see \cite{Kato}) that all the $\zeta_{n}(\tau)$ are analytic with respect to $\tau$. Moreover the Feynman-Hellmann (\cite{IsZh88}) formula provides
\begin{equation}
\label{E:FH}
\forall n \geq 1, \forall \tau \in \R, \quad \zeta_{n}'(\tau)=-2 \int_{\R_{+}}(r-\tau)^2z_{n,\tau}(r) r \d r \ , 
\end{equation}
 and we deduce that for all $n \geq1$, the function $\tau\mapsto \zeta_{n}(\tau)$ is decreasing on $(-\infty,0)$. 

In the special case $\tau=0$, the operator $\Lunid(\tau)$ is also known as the Laguerre operator (see \cite{AbSt64}) whose eigenvalues are known: 
$$\forall n \geq 1, \quad \zeta_{n}(0)=4n-2 \ . $$
\begin{rem}
The eigenvalues of  the 2D harmonic oscillator $\dgG(0)=-\Delta+\|(x,y)\|^2$ with $(x,y)\in \R^2$ are the even positive  integers. Only the eigenspaces associated to eigenvalues of the form $4n-2$ have axisymmetric eigenfunctions. 
\end{rem}
\subsection{Limits for large parameters}
Using the lower bound $(r-\tau)^2\geq\tau^2$ for $\tau \leq0$ we deduce from the min-max principle that for all $\tau \leq 0$ we have $\zeta_{n}(\tau) \geq \tau^2$ and therefore 
\begin{equation}
\label{E:limtau-inf}
\lim_{\tau\to-\infty} \zeta_{n}(\tau)= +\infty \ .
\end{equation}
In the case of the de Gennes operator $\dg_{0}(\tau)$ the eigenfunctions concentrate for large $\tau$ in the wells of the potential $(r-\tau)^2$ and therefore the eigenvalues of $\dg_{0}(\tau)$ converge toward the Landau level for large $\tau$ (see \eqref{E:limtaugranddG}). This is again true for the eigenvalues of the operator $\Lunid(\tau)$, indeed the potential satisfies the hypothesis of \cite[Proposition 3.6]{Yaf08} and we deduce: 
\begin{prop}
\label{P:limite}
We have 
$$\forall n \geq1, \quad \lim_{\tau\to+\infty}\zeta_{n}(\tau)=2n-1 \ .$$
\end{prop}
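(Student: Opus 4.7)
The intuition is that for large $\tau$, the potential well $(r-\tau)^2$ has its minimum at $r=\tau$, far from the singularity at the origin. Under the change of variable $s=r-\tau$, the operator $\Lunid(\tau)$ becomes
$$-\partial_s^2-\tfrac{1}{s+\tau}\partial_s+s^2 \quad \text{on } L^2((s+\tau)\,ds;\,(-\tau,+\infty)).$$
As $\tau\to+\infty$ the first-order term vanishes and the weight $s+\tau$ is asymptotically constant (equal to $\tau$) on any bounded $s$-window, so the rescaled operator formally degenerates to the 1D harmonic oscillator $-\partial_s^2+s^2$ on $L^2(\R)$, whose $n$-th eigenvalue is exactly $2n-1$. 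The paper invokes \cite[Proposition 3.6]{Yaf08}; a self-contained proof would combine two complementary bounds.

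\textbf{Upper bound via min-max.} Let $(h_n)_{n\geq 1}$ denote the normalized Hermite functions with $(-\partial_s^2+s^2)h_n=(2n-1)h_n$. Pick a smooth cutoff $\chi$ equal to $1$ on $|s|\leq \tau/2$ and supported in $|s|<3\tau/4$, and set $u_{k,\tau}(r):=\chi(r-\tau)h_k(r-\tau)$ for $k=1,\ldots,n$. Using the Gaussian decay of $h_k$, the vanishing of $\int_{\R}s\,h_k(s)h_l(s)\,ds$ by parity, and a direct computation, one obtains
$$\langle u_{k,\tau},u_{l,\tau}\rangle_{L^2_r(\R_+)}=\tau\,\delta_{kl}+O(e^{-c\tau^2}), \qquad \gq_{\tau}(u_{k,\tau},u_{l,\tau})=(2k-1)\tau\,\delta_{kl}+O(e^{-c\tau^2}).$$
Applied to the span $V_n$ of $u_{1,\tau},\ldots,u_{n,\tau}$, the min-max principle yields $\zeta_n(\tau)\leq 2n-1+o(1)$.

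\textbf{Lower bound via IMS localization.} Choose a smooth quadratic partition $\chi_1^2+\chi_2^2=1$ on $\R_+$ with $\chi_1$ supported in $|r-\tau|\leq \tau^{1/2}$ and $\chi_2$ vanishing on $|r-\tau|\leq \tau^{1/2}/2$. The localization formula applied in $L^2_r$ gives $\gq_{\tau}(u)=\gq_{\tau}(\chi_1 u)+\gq_{\tau}(\chi_2 u)-\int_{\R_+}(|\chi_1'|^2+|\chi_2'|^2)|u|^2 r\,dr$, with the remainder of order $O(\tau^{-1})\|u\|_{L^2_r}^2$. On the support of $\chi_2$, the harmonic confinement forces $\gq_{\tau}(\chi_2 u)\geq \tfrac{\tau}{4}\|\chi_2 u\|_{L^2_r}^2$, eventually dominating any fixed level $2n-1$. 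On the support of $\chi_1$, the weight $r=\tau(1+O(\tau^{-1/2}))$ is almost constant, so $\gq_{\tau}(\chi_1 u)$ is bounded below by $(1-o(1))\tau$ times the 1D harmonic oscillator quadratic form in the variable $s=r-\tau$. Combining via min-max yields $\zeta_n(\tau)\geq 2n-1-o(1)$.

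\textbf{Main obstacle.} The delicate step is the lower bound: one must simultaneously control the IMS cutoff error $O(\tau^{-1})$, the weight-replacement error $O(\tau^{-1/2})$, and the first-order term $\tfrac{1}{r}\partial_r$ (of size $O(\tau^{-1})$ on the support of $\chi_1$, absorbed via a relative bound against $-\partial_r^2$). The scale $\tau^{1/2}$ for the localization window is tuned so that all these errors vanish as $\tau\to+\infty$, pinning the limit to $2n-1$.
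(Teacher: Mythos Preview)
Your approach is sound and differs from the paper, which simply invokes \cite[Proposition~3.6]{Yaf08} without further argument. The paper then proves the sharper two-term asymptotic (Proposition~\ref{P:asymptoticzeta}) by a different route: it first applies the change of variable $t=r^2/2$, which turns $L^2_r(\R_+)$ into an \emph{unweighted} $L^2$ space, and only then translates and rescales. That trick removes the weight-replacement error you have to manage by hand and makes the comparison with the harmonic oscillator cleaner; it is worth noting as an alternative.

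One slip in your upper-bound step: the parity argument for the off-diagonal terms is wrong. The integral $\int_{\R}s\,h_k(s)h_l(s)\,ds$ vanishes when $k$ and $l$ have the same parity, but \emph{not} when $|k-l|=1$ (recall $x h_k$ is a combination of $h_{k\pm1}$). Hence the Gram matrix is $\tau I + B + O(e^{-c\tau^2})$ and the form matrix is $\tau\,\mathrm{diag}(1,3,\ldots,2n-1)+C+O(e^{-c\tau^2})$ with $B,C$ bounded, not exponentially small. This does not harm the conclusion: the generalized eigenvalues are still $2k-1+O(\tau^{-1})$, so $\zeta_n(\tau)\leq 2n-1+O(\tau^{-1})$.

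For the lower bound, your IMS sketch is correct in spirit, and the localization formula holds verbatim in $L^2_r$. The passage from the form inequality to $\zeta_n(\tau)\geq 2n-1-o(1)$ for $n>1$ deserves one more sentence: after IMS you obtain an operator inequality $\Lunid(\tau)\geq \chi_1\Lunid(\tau)\chi_1+\chi_2\Lunid(\tau)\chi_2-O(\tau^{-1})$, and the $n$-th eigenvalue of the right-hand side is controlled because the $\chi_2$-block has bottom at least $\tau/4$ while the $\chi_1$-block, after replacing the weight $r$ by $\tau(1+O(\tau^{-1/2}))$, is bounded below by $(1-o(1))$ times the harmonic oscillator on a large interval with Dirichlet conditions, whose $n$-th eigenvalue tends to $2n-1$. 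Making this explicit closes the argument.
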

 However we work in a weighted space and the harmonic approximation that consists in using the Hermite's functions as quasi-modes is not as good as in the case of the de Gennes operator. It is proven in \cite[Proposition 4.7]{Yaf08} that
 \begin{equation}
\label{E:EstYafaev}
 \forall n\geq1, \quad \exists \gamma_{n}>0,\,  \exists \tau_{n}>0,\,  \forall \tau>\tau_{n}, \quad \zeta_{n}(\tau) \leq (2n-1)-\gamma_{n}\tau^{-2} \ . 
 \end{equation}
 We give a two-terms asymptotics of the band functions $\zeta_{n}$ for large $\tau$:
 \begin{prop}
 \label{P:asymptoticzeta}
 For all $n\geq1$ we have
 \begin{equation}
\label{E:DAzetan}
 \zeta_{n}(\tau)\underset{\tau\to+\infty}{=}2n-1-\frac{1}{4\tau^2}+O\left(\frac{1}{\tau^3}\right) \ . 
 \end{equation}
 \end{prop}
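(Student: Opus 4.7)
The strategy is to unweight the operator $\Lunid(\tau)$ and then view it as a perturbation of the Dirichlet de Gennes operator, with the $O(\tau^{-2})$ correction arising from an inverse-square potential evaluated at the concentration point $r=\tau$. The map $u\mapsto v:=\sqrt{r}\,u$ is a unitary isomorphism $L^2_r(\R_+)\to L^2(\R_+)$, and a direct computation of $-u''-u'/r$ in the $v$ variable shows that $\Lunid(\tau)$ is unitarily equivalent to
\[
\widetilde{\Lunid}(\tau):=-\partial_r^2+(r-\tau)^2-\frac{1}{4r^2}=\dgD_{0}(\tau)-\frac{1}{4r^2}
\]
on $L^2(\R_+)$ with Dirichlet condition at $0$ (the condition $(ru')|_{r=0}=0$ translates into $v(0)=0$ for regular $u$). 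Hence $\zeta_{n}(\tau)$ is the $n$-th eigenvalue of the right-hand side; since $\mudgD_{n}(\tau)=2n-1+O(e^{-\tau^2/2})$ by \eqref{dgtunD}, the $O(\tau^{-2})$ correction must come entirely from the perturbation $-1/(4r^2)$.

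\textbf{Key matrix estimate and upper bound.} Let $u^D_k$ denote the $k$-th $L^2$-normalized eigenfunction of $\dgD_{0}(\tau)$; formula \eqref{dgtunD} together with standard estimates shows that $u^D_k$ is exponentially close in $L^2$ to the shifted Hermite function $h_k(\cdot-\tau)$, and hence satisfies an Agmon-type bound $\int e^{\varepsilon(r-\tau)^2}|u^D_k(r)|^2\,dr\leq C_n$ uniformly in $\tau$ large. Combining this with the Taylor expansion
\[
\frac{1}{4r^2}=\frac{1}{4\tau^2}-\frac{r-\tau}{2\tau^3}+O\!\Big(\frac{(r-\tau)^2}{\tau^4}\Big)
\]
gives the matrix estimate
\[
M_{jk}:=\langle u^D_j,(4r^2)^{-1}u^D_k\rangle=\frac{1}{4\tau^2}\delta_{jk}+O(\tau^{-3}),\qquad 1\leq j,k\leq n,
\]
where the $O(\tau^{-3})$ off-diagonal contribution is produced by the matrix elements $\langle h_j,s h_k\rangle$ for $|j-k|=1$ (diagonal Taylor corrections vanish by parity and are of order $\tau^{-4}$). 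The min-max principle applied with the trial subspace $W_n:=\mathrm{span}(u^D_1,\ldots,u^D_n)$ then yields $\zeta_{n}(\tau)\leq \mudgD_{n}(\tau)-\lambda_{\min}(M_n)=(2n-1)-1/(4\tau^2)+O(\tau^{-3})$, since the block $\mathrm{diag}(\mudgD_{1},\ldots,\mudgD_{n})$ has Landau gaps $\approx 2$ and is perturbed by $-M_n$ of norm $O(\tau^{-2})$.

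\textbf{Lower bound and main obstacle.} For the matching lower bound I would use a multi-dimensional quasi-mode argument. Setting $\nu_k:=\mudgD_{k}(\tau)-1/(4\tau^2)$, a direct computation using the Agmon bound yields the residual estimate
\[
\|(\widetilde{\Lunid}(\tau)-\nu_k)u^D_k\|_{L^2}^2=\int\big|(4r^2)^{-1}-(4\tau^2)^{-1}\big|^2|u^D_k(r)|^2\,dr=O(\tau^{-6}),
\]
since $|(4r^2)^{-1}-(4\tau^2)^{-1}|\leq C|r-\tau|/\tau^3$ on $\{r\geq\tau/2\}$, while the Agmon bound renders the singular contribution from $\{r<\tau/2\}$ exponentially small. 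Since the orthonormal quasi-modes $(u^D_k)_{k=1}^n$ have targets $\nu_k$ with gaps $\approx 2$, the standard multi-dimensional quasi-mode lemma (equivalently, a spectral projection argument combined with a Temple-type inequality) forces the first $n$ eigenvalues of $\widetilde{\Lunid}(\tau)$ to lie within $O(\tau^{-3})$ of $\{\nu_1,\ldots,\nu_n\}$, providing the matching lower bound. The main obstacle is the clean quantitative application of this quasi-mode lemma together with the control of the singular contribution to the residual near $r=0$; both rely on the Agmon decay of $u^D_k$ (itself a consequence of \eqref{dgtunD}) and the Hardy inequality $-\partial_r^2\geq(4r^2)^{-1}$ on $H^1_0(\R_+)$ to handle the interaction of the inverse-square potential with the Dirichlet boundary.
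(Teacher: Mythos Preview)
Your Liouville-transformation approach is genuinely different from the paper's and arguably more transparent. The paper instead substitutes $t=r^2/2$, then recenters and rescales via $x=\tau^{-1}(t-\tau^2/2)$ to obtain an operator on $(-\tau/2,\infty)$, and carries out a three-term formal perturbation expansion in $h=\tau^{-1}$ around the harmonic oscillator: it solves $(H_0+hH_1+h^2H_2)u_h\approx E_h u_h$ order by order, finding $E_0=2n-1$, $E_1=0$, and (after a page of Hermite-function algebra) $E_2=-1/4$. Your route exposes the $-1/(4\tau^2)$ correction immediately as the value of the perturbing potential $-1/(4r^2)$ at the concentration point $r=\tau$, and replaces that explicit three-step construction by the known exponential asymptotics \eqref{dgtunD} of $\dgD_{0}(\tau)$.

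There is, however, a genuine gap in the lower bound. The quantity you display,
\[
\int_0^\infty\big|(4r^2)^{-1}-(4\tau^2)^{-1}\big|^2\,|u^D_k(r)|^2\,dr,
\]
is \emph{infinite}, not $O(\tau^{-6})$: since $u^D_k(r)=c_k\,r+O(r^2)$ near $0$ with $c_k=(u^D_k)'(0)\neq0$, the integrand behaves like $c_k^2/(16r^2)$ there, and the Agmon bound makes $c_k$ exponentially small in $\tau$ but cannot cure the divergence of $\int_0^\delta r^{-2}\,dr$. Equivalently, $u^D_k$ lies in the form domain of $\widetilde{\Lunid}(\tau)$ (Hardy indeed gives $\int|u^D_k|^2/(4r^2)<\infty$, so your \emph{upper} bound is fine) but not in its operator domain. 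The repair is to multiply by a smooth cutoff supported in $\{r>\tau/4\}$: by Agmon this costs only $O(e^{-c\tau^2})$ in all norms, the commutator terms are likewise exponentially small, and the cutoff trivially satisfies any boundary condition at $0$. This last point matters because your identification of the boundary condition as ``Dirichlet $v(0)=0$'' is imprecise: the potential $-1/(4r^2)$ sits exactly at the critical Hardy threshold, the indicial exponent is a double root $\alpha=1/2$, and both fundamental solutions ($\sqrt r$ and $\sqrt r\log r$) vanish at $0$; the unitary equivalence with $\Lunid(\tau)$ singles out the $\sqrt r$ behavior, not merely $v(0)=0$. With the cutoff in place these self-adjoint-extension subtleties become irrelevant and your argument goes through.
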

\begin{proof}
We use the change of variable $t=\frac{r^2}{2}$ and we get that the operator $\Lunid(\tau)$ is unitary equivalent to 
$$-2\partial_{t}t\partial_{t}+(\sqrt{2t}-\tau)^2, \, \quad t>0$$
defined on $\{u\in H^1(\R_{+}), \, t\, u\in H^1(\R_{+}) \}$. Let us remark that we are now working on an unweighted space. We center and rescale this operator with the change of variable defined by $x=\tau^{-1}(t-\frac{\tau^2}{2})$ and we are led to study the operator 
\begin{equation}
\label{D:Lunidsc}
\Lunid^{\rm sc}(\tau):=-\partial_{x}^2-2\tau^{-1}\partial_{x}x\partial_{x}+\tau(\sqrt{2x+\tau}-\sqrt{\tau})^2 \ , x\in J_{\tau}
\end{equation}
defined on $\dom(\Lunid^{\rm sc}(\tau))=\{u\in L^2(J_{\tau}), \, \partial_{x}^2u\in L^2(J_{\tau}), \,\partial_{x}x\partial_{x}u\in L^2(J_{\tau}), \, V_{\tau}u\in L^2(J_{\tau})  \}$ where $J_{\tau}:=(-\frac{\tau}{2},+\infty)$ and the normalized potential is $$V_{\tau}(x):=\tau(\sqrt{2x+\tau}-\sqrt{\tau})^2 \ . 
$$ The expansion of $V_{\tau}$ near 0 provides 
\begin{equation}
\label{E:DLV}
\exists x_{0}>0, \exists C>0, \, \forall x\in (-x_{0},x_{0}), \, \forall \tau\geq 1, \quad |V_{\tau}(x)-\left(x^2-\frac{x^3}{\tau}+\frac{5x^4}{4\tau^2}\right)|\leq C\frac{x^5}{\tau^3} \ . 
\end{equation}
Let $h=\tau^{-1}$. We define $H=H_{0}+hH_{1}+h^2H_{2}$ with 
\begin{equation}
\left\{
\begin{aligned}
&H_{0}:=-\partial_{x}^2+x^2\\
&H_{1}:=-2\partial_{x}x\partial_{x}-x^3\\
&H_{2}:=\tfrac{5}{4}x^4 
\end{aligned}
\right.
\end{equation}
acting on functions of $L^2(\R)$. Let us notice that the formal two-terms expansion of $\Lunid^{\rm sc}(\tau)$ for large $\tau$ corresponds to the operator $H$. In order to construct a quasi-mode for $\Lunid^{\rm sc}(\tau)$ when $\tau$ gets large we construct a quasi-mode for $H$ when $h$ gets small. We are looking for an approximate eigenpair $(E_{h},u_{h})$ for the operator $H$ with $E_{h}=E_{0}+hE_{1}+h^2E_{2}$ and $u_{h}=u_{0}+hu_{1}+h^2u_{2}$. Solving formally $Hu_{h}=E_{h}u_{h}$ leads to solve the following equations: 
\begin{subnumcases}{}
\label{Eqsyst-1}
H_{0}u_{0}=E_{0}u_{0}\\
\label{Eqsyst-2}
H_{1}u_{0}+H_{0}u_{1}=E_{0}u_{1}+E_{1}u_{0}\\
\label{Eqsyst-3}
H_{2}u_{0}+H_{1}u_{1}+H_{0}u_{2}=E_{2}u_{0}+E_{1}u_{1}+E_{0}u_{2}
\end{subnumcases}
We solve \eqref{Eqsyst-1} by taking $$E_{0}=E_{0,n}:=2n-1 \quad \mbox{and} \quad u_{0}=u_{0,n}:=\Psi_{n}, \quad n\geq1$$ where $\Psi_{n}$ denotes the $n$-th normalized Hermite's function with the convention that $\Psi_{1}(x):=\pi^{-1/4}e^{-x^2/2}$ is the first Hermite's function. We take the scalar product of \eqref{Eqsyst-2} against $u_{0,n}$ and we get $E_{1}=\langle H_{1}u_{0,n},u_{0,n}\rangle$. When $n$ is odd (respectively even), the $n$-th Hermite's function is even (respectively odd), $H_{1}u_{0,n}$ is odd (respectively even) and $u_{0,n}\cdot H_{1}u_{0,n}$ is odd. We deduce that 
$$E_{1}=0 \ . $$
We now find $u_{1}$: we have to solve 
\begin{equation}
\label{E:jenaimarre}
(H_{0}-E_{0})u_{1}=-H_{1}u_{0,n} \ . 
\end{equation}
We decompose $(-H_{1}u_{0,n})(x)=x^3\Psi_{n}(x)+2\Psi_{n}'(x)+\Psi_{n}''(x)$ along the basis of Hermite's functions: 
$$-H_{1}u_{0,n}=a_{n}\Psi_{n-3}+b_{n}\Psi_{n-1}+c_{n}\Psi_{n+1}+d_{n}\Psi_{n+3} \ . $$
Using that for $n\geq1$ (see \cite{AbSt64}):
$$x\Psi_{n}(x)= \sqrt{\frac{n-1}{2}}\Psi_{n-1}+\sqrt{\frac{n}{2}}\Psi_{n+1} \quad \mbox{and}\quad \Psi_{n}'(x)= \sqrt{\frac{n-1}{2}}\Psi_{n-1}-\sqrt{\frac{n}{2}}\Psi_{n+1}  \ ,$$
computations yield
\begin{equation}
\forall n\geq1, \, 
\left\{
\begin{aligned}
&a_{n}=3\cdot2^{-3/2}\sqrt{(n-1)(n-2)(n-3)}\\
&b_{n}=2^{-3/2}(n-1)\sqrt{n-1}\\
&c_{n}=2^{-3/2}n\sqrt{n}\\
&d_{n}= 3\cdot2^{-3/2}\sqrt{n(n+1)(n+2))} \ .
\end{aligned}
\right.
\end{equation}
For solving \eqref{E:jenaimarre} we take $$u_{1}=u_{1,n}:=-\frac{a_{n}}{6}\Psi_{n-3}-\frac{b_{n}}{2}\Psi_{n-1}+\frac{c_{n}}{2}\Psi_{n+1}+\frac{d_{n}}{6}\Psi_{n+3} \ . $$ 
We take the scalar product of \eqref{Eqsyst-3} with $u_{0,n}$ and we get 
$$E_{2}=E_{2,n}:=\langle H_{2}u_{0,n},u_{0,n}\rangle+\langle H_{1}u_{1,n},u_{0,n}\rangle \ . $$
We have $\langle H_{2}u_{0,n},u_{0,n}\rangle=\frac{5}{4}\|x^2\Psi_{n}\|^2_{L^2(\R)}=\frac{15}{16}(2n^2-2n+1)$ and 
\begin{align*}
\langle H_{1}u_{1,n},u_{0,n}\rangle=\langle u_{1,n},H_{1}u_{0,n}\rangle=\left(\frac{a_{n}^2}{6}+\frac{b_{n}^2}{2}-\frac{c_{n}^2}{2}-\frac{d_{n}^2}{6}\right)=-\frac{1}{16}\left(30n^2-30n+19 \right) \ .
\end{align*}
Therefore we deduce 
$$E_{2,n}=-\frac{1}{4} \ .$$
We now find $u_{2}$ by solving 
$$(H_{0}-E_{0})u_{2}=E_{2,n}u_{0,n}-H_{2}u_{0,n}-H_{1}u_{1,n} \ . $$
By construction of $E_{2,n}$ the right hand side is orthogonal to $u_{0,n}$. Thanks to the Fredholm alternative we get a unique solution $u_{2,n}$ orthogonal to $u_{0,n}$. Moreover since the right hand side has exponential decay at infinity, $u_{2,n}$ has also exponential decay at infinity.

We now evaluate the quasi-pair constructed. Let $E_{h,n}:=2n-1-\frac{1}{4}h^2$ be the approximate eigenvalue constructed and $u_{h,n}:=u_{0,n}+hu_{1,n}+h^2u_{2,n}$ be the associated approximate eigenfunction. Since $u_{h,n}$ has exponential decay we have for all $n\geq1$ that there exists $C_{n}>0$ such that for all $h\in (0,1)$:
\begin{equation}
\label{E:estimationuhn}
 \|Hu_{h,n}-E_{h,n}u_{h,n}\|_{L^2(\R)}^2\leq C_{n}h^3  \quad \mbox{and} \quad |\|u_{h,n}\|_{L^2(\R)}-1| \leq C_{n}h \ . 
 \end{equation}
Let $\chi_{\tau}\in \mathcal{C}_{0}^{\infty}(J_{\tau})$ be a cut-off function wich satisfies $0\leq\chi_{\tau}\leq1$, $\chi_{\tau}(x)=1$ for $x\geq -\frac{1}{4}\tau$ and $\chi_{\tau}(x)=0$ for $x \leq -\frac{1}{2}\tau$. We define $u_{\tau,n}^{\rm qm}:=\chi_{\tau} u_{h,n}$ with $h=\tau^{-1}$. We have $u_{\tau,n}^{\rm qm}\in \dom(\Lunid^{\rm sc}(\tau))$. Since $u_{h,n}$ has exponential decay at infinity we get with $h=\tau^{-1}$: $$\|\Lunid^{\rm sc}(\tau)u_{\tau,n}^{\rm qm}-(2n-1-\tfrac{1}{4\tau^2})u_{\tau,n}^{\rm qm} \|_{L^2(J_{\tau})}=\|Hu_{h,n}-E_{h,n}u_{h,n} \|_{L^2(\R)}+O(h^{\infty}) \ . $$
Using \eqref{E:estimationuhn} and the expansion \eqref{E:DLV} we deduce that for all $n\geq1$ there exists $C_{n}>0$ such that for all $\tau\geq1$:
$$ \|\Lunid^{\rm sc}(\tau)u_{\tau,n}^{\rm qm}-(2n-1-\tfrac{1}{4\tau^2})u_{\tau,n}^{\rm qm}\|_{L^2(J_{\tau})}^2\leq \frac{C_{n}}{\tau^3}  \quad \mbox{and} \quad |\|u_{\tau,n}\|_{L^2(J_{\tau})}-1| \leq \frac{C_{n}}{\tau}  \ . $$
Since $\Lunid^{\rm sc}(\tau)$ is unitary equivalent to $\Lunid(\tau)$ we deduce the asymptotic expansion \eqref{E:DAzetan}.
\end{proof}

\begin{rem}
In the case of a 2D electron gas submitted to the magnetic field $B_{0}$, the group velocity of a quantum particle is given by $-(\mudgN_{n})'(\tau)$ (see \cite[Section II]{PeetRej} for a physical approach or \cite{Yaf08} for a mathematical analysis). Let us notice that unlike for the eigenvalues $\mudgN_{n}(\tau)$ of the de Gennes operator (see \eqref{E:limtaugranddG}), the convergence of $\zeta_{n}(\tau)$ toward the Landau levels $2n-1$ is not exponential. Therefore we should expect different transport properties associated to a 3D electron gas submitted to the magnetic field $\Bcyl$. 
\end{rem}

 If we denote by $m\in \Z$ the magnetic quantum number, the spectral analysis of $\dgG(\tau)$ can be deduced from the analysis of the spectrum of the operators 
 $$  -\partial_{r}^2-\frac{1}{r}\partial_{r}+\frac{m^2}{r^2}+(r-\tau)^2 \ , \quad r>0$$
 acting on $L^2_{r}(\R_{+})$. We denote by $\zeta_{n,m}(\tau)$ the $n$-th eigenvalue of this operator. In this article we have focused on the case $m=0$ and we have denoted by $\zeta_{n}(\tau)=\zeta_{n,0}$. It is proved in \cite[Proposition 3.6]{Yaf08} that all the $\zeta_{n,m}(\tau)$ converge toward the Landau level $2n-1$ for large values of $\tau$. Looking at the proof of Proposition \ref{P:asymptoticzeta}, we can deduce that 
 \begin{equation}
\label{E:convzetanm}
 \forall m\in \Z, \, \forall n\geq1, \quad  \zeta_{n,m}(\tau)\underset{\tau\to+\infty}{=}2n-1+(m^2-\tfrac{1}{4})\frac{1}{\tau^2}+O\left(\frac{1}{\tau^3}\right)  \ . 
 \end{equation}
 Therefore only the eigenvalues of $\dgG(\tau)$ associated to axisymmetric function are below the Landau level for large values of $\tau$. Let us also notice that the second term of this asymptotics does not depend on the energy level $n$.

It is also possible to add an electric perturbation to the magnetic hamiltonians $H_{0}$ and $H_{\Acyl}$. The associated Krein spectral shift function (see \cite{BirYaf92} for an overview on the spectral shift function) will have singularities near the Landau levels $2n-1$ who play the role of ``threesholds" in the spectrum of the operators $H_{0}$ and $H_{\Acyl}$. The asymptotic behavior of the spectral shift function near the threesholds depends among other things on the behavior of the band functions at energies closed to the threesholds (see for example \cite{BriRaiSoc08} for a study of a magnetic hamiltonian on a half-strip). Since the band functions $\mudgN_{n}(\tau)$ and $\zeta_{n}(\tau)$ have different behaviors for large $\tau$, we expect that the spectral shift function associated to perturbations of the hamiltonians $H_{0}$ and $H_{\Acyl}$ will have different singular behaviors when approaching the Landau levels by below. Since all the $(\zeta_{n,m})_{m\neq0}$ converge to the same Landau level $2n-1$ by above (see \eqref{E:convzetanm}), the singular behavior of the SSF when approaching the threesholds by above may also be interesting.
\subsection{Rough upper bound}
Using the estimation \eqref{E:EstYafaev}, it is proved in \cite[Theorem 4.9]{Yaf08} that all the function $\tau\mapsto \zeta_{n}(\tau)$ loose their monotonicity for $\tau>0$ and reach their infimum. We provide an upper bound for the infimum of $\tau\mapsto\zeta_{1}(\tau)$:
\begin{prop}
\label{P:probatteint}
We have 
\begin{equation}
\label{E:upperboundxi0}
 \Xi_{0} \leq \sqrt{4-\pi} 
 \end{equation}
 and there exists $\tau^{*}\in \R$ such that $\Xi_{0}=\zeta_{1}(\tau^{*})$.
\end{prop}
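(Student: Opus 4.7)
The plan is to prove the two parts of the proposition separately: first the explicit upper bound $\Xi_0 \leq \sqrt{4-\pi}$ by exhibiting a trial function, then the existence of $\tau^*$ by a soft compactness argument that combines this upper bound with the known limiting behavior of $\zeta_1$ at $\pm \infty$.

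For the upper bound, I would use the min-max principle applied to $\gq_\tau$. Since $\zeta_1(\tau) = \inf_{u \in B^1_r(\R_+), \, u \neq 0} \gq_\tau(u)/\|u\|^2_{L^2_r(\R_+)}$, and since $\Xi_0 = \inf_\tau \zeta_1(\tau)$, it suffices to minimize the Rayleigh quotient jointly in $u$ and $\tau$. A natural one-parameter trial family is the Gaussian $u_\alpha(r) := e^{-\alpha r^2/2}$, with $\alpha > 0$ to be optimized; one checks $u_\alpha \in B^1_r(\R_+)$ immediately. Using the standard integrals $\int_0^\infty r e^{-\alpha r^2}\d r = 1/(2\alpha)$, $\int_0^\infty r^2 e^{-\alpha r^2}\d r = \sqrt{\pi}/(4\alpha^{3/2})$, and $\int_0^\infty r^3 e^{-\alpha r^2}\d r = 1/(2\alpha^2)$, I would compute
\[
\frac{\gq_\tau(u_\alpha)}{\|u_\alpha\|^2_{L^2_r(\R_+)}} \;=\; \alpha + \frac{1}{\alpha} - \frac{\sqrt{\pi}\,\tau}{\sqrt{\alpha}} + \tau^2.
\]

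Minimizing this expression first in $\tau$ (choose $\tau_\alpha = \sqrt{\pi}/(2\sqrt{\alpha})$) yields the reduced bound $\alpha + (4-\pi)/(4\alpha)$, whose minimum over $\alpha > 0$ is attained at $\alpha_* = \sqrt{4-\pi}/2$ and equals exactly $\sqrt{4-\pi}$. Substituting back gives $\zeta_1(\tau_{\alpha_*}) \leq \sqrt{4-\pi}$, hence \eqref{E:upperboundxi0}. This calculation is completely routine; the only conceptual choice is selecting the trial family, and the Gaussian is natural both because the unperturbed potential near $r=\tau$ looks harmonic and because the resulting integrals admit closed forms.

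For the existence of a minimizer, I would use that $\zeta_1$ is real-analytic on $\R$ (inherited from the Kato analyticity mentioned before \eqref{E:FH}), together with two limiting properties already established: $\zeta_1(\tau) \to +\infty$ as $\tau \to -\infty$ by \eqref{E:limtau-inf}, and $\zeta_1(\tau) \to 1$ as $\tau \to +\infty$ by Proposition \ref{P:limite} applied with $n=1$. Since $\sqrt{4-\pi} < 1$, the upper bound just proved shows that $\inf_\R \zeta_1 < 1$, strictly below the $+\infty$ limit. Consequently there exist $T_{-} < T_{+}$ such that $\zeta_1(\tau) > \sqrt{4-\pi}$ for every $\tau \notin [T_{-}, T_{+}]$, so that $\inf_\R \zeta_1 = \inf_{[T_{-},T_{+}]} \zeta_1$. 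The continuous function $\zeta_1$ attains its infimum on the compact interval $[T_{-}, T_{+}]$ at some $\tau^* \in \R$, which is the desired minimizer.

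I do not anticipate any significant obstacle in this proof; the only subtlety worth verifying carefully is that the chosen Gaussian lies in the form domain $B^1_r(\R_+)$ (all three integrability conditions $u \in L^2_r$, $u' \in L^2_r$, $r u \in L^2_r$ follow from the super-exponential decay at $+\infty$ and continuity at $0$), and that the strict inequality $\sqrt{4-\pi} < 1$ is indeed genuine, which is needed to rule out that the infimum is approached only as $\tau \to +\infty$.
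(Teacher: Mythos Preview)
Your proof is correct and follows essentially the same approach as the paper: both use the Gaussian trial family (your $u_\alpha(r)=e^{-\alpha r^2/2}$ is the paper's $u_\gamma(r)=e^{-\gamma r^2}$ with $\alpha=2\gamma$), compute the same Rayleigh quotient, and optimize to obtain $\sqrt{4-\pi}$; the only cosmetic difference is that the paper minimizes in $\gamma$ first and then in $\tau$, while you do the reverse. For the existence of $\tau^*$ the paper simply cites \cite[Theorem~4.9]{Yaf08} (invoked just before the proposition), whereas you give a short self-contained compactness argument from the bound $\sqrt{4-\pi}<1$ and the limits \eqref{E:limtau-inf}, Proposition~\ref{P:limite}; your argument is valid and arguably more transparent here.
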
 
\begin{proof}
 In order to get an upper bound we use gaussian quasi-modes: for $\gamma>0$ we define $u_{\gamma}(r):=e^{-\gamma r^2}$. Computations yield: 
 $$\frac{\gq_{\tau}(u_{\gamma})}{\|u_{\gamma}\|_{L^2_{r}(\R_{+})}^2}=2\gamma+\frac{1}{2\gamma}+\tau^2-\tau \left(\frac{\pi}{2\gamma}\right)^{1/2} \ .$$
We minimize the right hand side by choosing $\gamma=\frac{\pi}{8\tau^2}$ and we deduce from the min-max principle:
$$\zeta_{1}(\tau) \leq \frac{\pi}{4}\frac{1}{\tau^2}+\frac{4-\pi}{\pi}\tau^2 \ .$$
This upper bound is minimal for $\tau=(\frac{\pi^2}{4(4-\pi)})^{1/4}$ and provides \eqref{E:upperboundxi0} by using \eqref{E:relationfond}. 
\end{proof}

\section{Characterization of the minimum}
\subsection{Comparison between the lowest energies}
In this section we give a new expression of the derivative of the function $\zeta_{n}$. We use it to get a comparison between $\Theta_{0}$ and $\Xi_{0}$.

In order to have a  parameter-independent potential, we perform the translation $\rho=r-\tau$ and we get that $\Lunid(\tau)$ is unitary equivalent to the operator 
$$\LunidT(\tau):=-\partial_{\rho}^2-\frac{1}{\rho+\tau}\partial_{\rho} +\rho^2 \ , \quad \rho>-\tau  $$
acting on $L^2_{\rho+\tau}(I_{\tau})$ with $I_{\tau}:=(-\tau,+\infty)$. The domain of the operator $\LunidT(\tau)$ is deduced from $\dom(\Lunid(\tau))$ using the translation $\rho=r-\tau$. The interval $I_{\tau}$ depends now on the parameter. Usually the technics from \cite{DauHe93} and \cite{DauHe93-II} give a trace formula for the derivative with respect to the boundary of the eigenvalues of such an operator  (see the formula \eqref{F:DH93} for example). However the results of  \cite{DauHe93-II} specific to weighted spaces cannot be applied, indeed the weight $\rho+\tau$ depends on the parameter $\tau$. We prove \`a la ``Bolley-Dauge-Helffer" a formula for the derivative that is of a different kind from \eqref{F:DH93} and \cite[Theorem 1.8]{DauHe93-II}. To our knowledge this formula is independent from the Feynman-Hellmann formula \eqref{E:FH}:
\begin{prop}
\label{P:nouvelleformule}
Let $n\geq1$ and let $z_{n,\tau}$ be a normalized eigenfunction associated to $\zeta_{n}(\tau)$ for the operator $\Lunid(\tau)$. We have 
\begin{equation}
\label{E:nouvellederivee}
 \zeta_{n}'(\tau)= \langle (\dgN_{0}(\tau)-\zeta_{n}(\tau))z_{n,\tau},z_{n,\tau} \rangle_{L^2(\R_{+})} \ .
\end{equation}
\end{prop}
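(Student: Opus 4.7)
The strategy is to combine the Feynman--Hellmann formula \eqref{E:FH} with a Pohozaev-type virial identity obtained by testing the eigenvalue equation against $r\, z_{n,\tau}'(r)$. The two approaches compute $\zeta_n'(\tau)$ via different routes, and equating them produces the desired formula.

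Concretely, I would multiply the eigenvalue equation
\[
-z_{n,\tau}''(r) - \tfrac{1}{r}\, z_{n,\tau}'(r) + (r-\tau)^2\, z_{n,\tau}(r) = \zeta_n(\tau)\, z_{n,\tau}(r)
\]
by $r\, z_{n,\tau}'(r)$ and integrate on $\R_+$. Each of the resulting integrals is treated via a standard one-step integration by parts, writing $r\, z'_{n,\tau} z_{n,\tau} = \tfrac{r}{2}(z_{n,\tau}^2)'$, $r\, z''_{n,\tau} z'_{n,\tau} = \tfrac{r}{2}((z'_{n,\tau})^2)'$, and $r(r-\tau)^2 z_{n,\tau} z'_{n,\tau} = \tfrac{r(r-\tau)^2}{2}(z_{n,\tau}^2)'$. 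Collecting the pieces yields the virial identity
\[
\int_0^{\infty}\! |z_{n,\tau}'(r)|^2\, dr + \int_0^{\infty}\! (r-\tau)^2\, z_{n,\tau}(r)^2\, dr - \zeta_n(\tau) \int_0^{\infty}\! z_{n,\tau}(r)^2\, dr = -2\int_0^{\infty}\! r(r-\tau)\, z_{n,\tau}(r)^2\, dr.
\]

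Identifying the two sides completes the proof. On one hand, the right-hand side is precisely $\zeta_n'(\tau)$ by the Feynman--Hellmann formula \eqref{E:FH}. On the other hand, by Proposition \ref{P:Fuchs} the eigenfunction $z_{n,\tau}$ is analytic at $0$ and satisfies the Neumann condition $z_{n,\tau}'(0)=0$, and it has Gaussian decay at infinity by standard Agmon estimates for operators with confining quadratic potential. These two facts imply that $z_{n,\tau}$, viewed as an element of $L^2(\R_+)$, belongs to $\dom(\dgN_0(\tau))$, so that the left-hand side is exactly the inner product $\langle (\dgN_0(\tau)-\zeta_n(\tau))\, z_{n,\tau}, z_{n,\tau} \rangle_{L^2(\R_+)}$. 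This yields the formula \eqref{E:nouvellederivee}.

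The main obstacle is the careful justification of the integrations by parts in the virial calculation: all boundary contributions at $r=0$ and $r=+\infty$ must vanish. At $r=0$ the multiplier $r$ eliminates the principal terms while analyticity of $z_{n,\tau}$ combined with the Neumann condition from Proposition \ref{P:Fuchs} controls the lower-order ones (and ensures that $z_{n,\tau}'/r$ stays bounded near $0$); at $r=+\infty$ the Gaussian decay inherited from the confining potential makes every relevant boundary term vanish. Once these technicalities are verified, the computation itself is routine.
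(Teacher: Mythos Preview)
Your argument is correct, and it takes a genuinely different route from the paper's. You test the eigenvalue equation against $r\,z_{n,\tau}'$ to obtain a Pohozaev-type identity, then identify the right-hand side with $\zeta_n'(\tau)$ via the Feynman--Hellmann formula \eqref{E:FH} and the left-hand side with the quadratic form of $\dgN_0(\tau)-\zeta_n(\tau)$ in $L^2(\R_+)$. The integrations by parts you describe are justified exactly as you say: analyticity at $0$ together with $z_{n,\tau}'(0)=0$ from Proposition~\ref{P:Fuchs} kills the contributions at $r=0$, and Gaussian decay from the confining potential kills those at infinity.

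The paper proceeds instead \emph{à la} Bolley--Dauge--Helffer: it translates to the operator $\LunidT(\tau)$ on the moving interval $(-\tau,+\infty)$, forms the difference quotient $(\zeta_n(\tau+h)-\zeta_n(\tau))\langle \hat z_{n,\tau+h},\hat z_{n,\tau}\rangle$, and after integrations by parts and passage to the limit obtains first the intermediate identity $\zeta_n'(\tau)=\int_0^\infty r^{-1}z_{n,\tau}'z_{n,\tau}\,dr$, which it then converts into \eqref{E:nouvellederivee} by substituting the eigenvalue equation. The paper explicitly stresses that its derivation is \emph{independent} of the Feynman--Hellmann formula. Your proof is shorter and more elementary, but it uses Feynman--Hellmann as an input; the paper's approach is longer but self-contained in this respect, and it yields the additional formula $\zeta_n'(\tau)=\int_0^\infty r^{-1}z_{n,\tau}'z_{n,\tau}\,dr$ along the way.
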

\begin{proof}
We denote by $\hat{z}_{n,\tau}(\rho):=z_{n,\tau}(\rho+\tau)$ a normalized eigenfunction of $\LunidT(\tau)$ associated to $\zeta_{n}(\tau)$. It satisfies 
\begin{equation}
\label{E:eqvp2}
-\hat{z}_{n,\tau}''(\rho)-\frac{\hat{z}_{n,\tau}'(\rho)}{\rho+\tau}+\rho^2\hat{z}_{n,\tau}(\rho)=\zeta_{n}(\tau)\hat{z}_{n,\tau}(\rho)  \ .
\end{equation} 
 For $h>0$ we introduce the quantity 
$$d_{n,\tau}(h):=\left(\zeta_{n}(\tau+h)-\zeta_{n}(\tau) \right)\langle \hat{z}_{n,\tau+h},\hat{z}_{n,\tau} \rangle_{L^2_{\rho+\tau}(I_{\tau})} \ . $$
The analyticity of the eigenpairs $(\zeta_{n}(\tau),\hat{z}_{n,\tau})_{n\in \N}$ is a direct consequence of the simplicity of the eigenvalues (see proposition \ref{P:Fuchs}) and of Kato's theory. Since the $\hat{z}_{n,\tau}$ are normalized in $L^2_{\rho+\tau}(I_{\tau})$ we deduce that 
\begin{equation}
\label{E:formulediabolique}
\lim_{h\to0}\frac{d_{n,\tau}(h)}{h}=\zeta_{n}'(\tau)  \ .
\end{equation}
On the other side using the eigenvalue equation \eqref{E:eqvp2} we get:
\begin{align*}
d_{n,\tau}(h)=&\int_{-\tau}^{+\infty}\big(\zeta_{n}(\tau+h)\hat{z}_{n,\tau+h}(\rho)\hat{z}_{n,\tau}(\rho)-\zeta_{n}(\tau)\hat{z}_{n,\tau+h}(\rho)\hat{z}_{n,\tau}(\rho)\big)(\rho+\tau) \d \rho
\\
=&\int_{-\tau}^{+\infty}\left(-\hat{z}_{n,\tau+h}''(\rho)-\frac{1}{\rho+\tau+h}\hat{z}_{n,\tau+h}'(\rho)+\rho^2\hat{z}_{n,\tau+h}(\rho)\right)\hat{z}_{n,\tau}(\rho)(\rho+\tau) \d \rho
\\
&-\int_{-\tau}^{+\infty}\left(-\hat{z}''_{\tau}(\rho)-\frac{1}{\rho+\tau}\hat{z}_{n,\tau}'(\rho)+\rho^2\hat{z}_{n,\tau}(\rho)\right)\hat{z}_{n,\tau+h}(\rho)(\rho+\tau) \d \rho \ .
\end{align*}
We make integrations by part on the terms with second derivative:
\begin{align*}
d_{\tau}(h)&=\int_{-\tau}^{+\infty}\hat{z}_{n,\tau+h}'(\rho)\big((\rho+\tau)\hat{z}_{n,\tau}'(\rho)+\hat{z}_{n,\tau}(\rho)\big)-\frac{\rho+\tau}{\rho+\tau+h}\hat{z}_{n,\tau+h}'(\rho)\hat{z}_{n,\tau}(\rho) \d \rho
\\
+&\int_{-\tau}^{+\infty}-\hat{z}_{n,\tau}'(\rho)\big((\rho+\tau)\hat{z}_{n,\tau+h}'(\rho)+\hat{z}_{n,\tau+h}(\rho)\big)+\hat{z}_{n,\tau}'(\rho)\hat{z}_{n,\tau+h}(\rho) \d \rho
\\
&=  h \int_{-\tau}^{+\infty}\frac{1}{\rho+\tau+h}\hat{z}_{n,\tau+h}'(\rho)\hat{z}_{n,\tau}(\rho) \d \rho \ .
\end{align*}
Thanks to \eqref{E:formulediabolique} and to the analyticity of the eigenpairs with respect to the parameter we have 
$$\zeta_{n}'(\tau)=\int_{-\tau}^{+\infty}\frac{1}{\rho+\tau}\hat{z}_{n,\tau}'(\rho)\hat{z}_{n,\tau}(\rho) \d \rho \ . $$
Using \eqref{E:eqvp2} we deduce
$$\zeta_{n}'(\tau)=\int_{-\tau}^{+\infty}\left(-\hat{z}_{n,\tau}''(\rho)+\rho^2\hat{z}_{n,\tau}(\rho)-\zeta_{n}(\tau)\hat{z}_{n,\tau}(\rho) \right)\hat{z}_{n,\tau}(\rho)\d \rho \ . $$
We make the translation $\rho=r-\tau$:
$$\zeta_{n}'(\tau)=\int_{\R_{+}}\left(-z_{n,\tau}''(r)+\left((r-\tau)^2z_{n,\tau}(r)-\zeta_{n}(\tau)z_{n,\tau}(r)\right)\right)z_{n,\tau}(r)\d r \ . $$
 Thanks to \eqref{E:vpneumann}, we have $z_{n,\tau}\in \dom(\dgN_{0}(\tau))$ and we deduce \eqref{E:nouvellederivee}.
\end{proof}
This formula is not sufficient to give direct informations on the monotonicity of the functions $\tau\mapsto\zeta_{n}(\tau)$. However it provides the following comparison between the bottom of the spectrum of the 2D hamiltonian $H_{0}$ and the one of the 3D hamiltonian $H_{\Acyl}$:
\begin{theo}
We have 
$$\Theta_{0}<\Xi_{0} \ . $$
\end{theo}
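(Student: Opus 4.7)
The plan is to evaluate the new derivative formula of Proposition~\ref{P:nouvelleformule} at a minimizer of $\zeta_1$ and then combine it with the Rayleigh-quotient characterization of $\mu_1^{\rm N}$. By Proposition~\ref{P:probatteint}, the infimum $\Xi_0=\inf_{\tau}\zeta_1(\tau)$ is attained at some $\tau^{*}\in\R$; by analyticity of the band function $\tau\mapsto\zeta_1(\tau)$, one has $\zeta_1'(\tau^{*})=0$. Substituting this into \eqref{E:nouvellederivee} yields
\[
\langle \dgN_0(\tau^{*})z_{1,\tau^{*}},z_{1,\tau^{*}}\rangle_{L^2(\R_{+})} = \Xi_0\,\|z_{1,\tau^{*}}\|_{L^2(\R_{+})}^2.
\]
From Proposition~\ref{P:Fuchs}, $z_{1,\tau^{*}}$ is analytic on $\overline{\R_{+}}$ and satisfies the Neumann condition $z_{1,\tau^{*}}'(0)=0$; combined with the exponential decay at $+\infty$ inherited from the confining potential $(r-\tau^{*})^2$, this places $z_{1,\tau^{*}}$ in the domain of $\dgN_0(\tau^{*})$. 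The min-max principle then provides
\[
\Theta_0\leq \mu_1^{\rm N}(\tau^{*})\leq \frac{\langle \dgN_0(\tau^{*})z_{1,\tau^{*}},z_{1,\tau^{*}}\rangle_{L^2(\R_{+})}}{\|z_{1,\tau^{*}}\|_{L^2(\R_{+})}^2}=\Xi_0.
\]

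To upgrade this to a strict inequality, I argue by contradiction. Suppose that $\mu_1^{\rm N}(\tau^{*})=\Xi_0$. Then the Rayleigh quotient of $\dgN_0(\tau^{*})$ is attained at $z_{1,\tau^{*}}$, which is therefore a first eigenfunction: $\dgN_0(\tau^{*})z_{1,\tau^{*}}=\Xi_0\,z_{1,\tau^{*}}$. Since $z_{1,\tau^{*}}$ also satisfies $\Lunid(\tau^{*})z_{1,\tau^{*}}=\Xi_0\,z_{1,\tau^{*}}$, and the two operators differ by a single first-order term
\[
\Lunid(\tau^{*})-\dgN_0(\tau^{*})=-\tfrac{1}{r}\partial_r,
\]
subtracting the two eigenvalue equations forces $z_{1,\tau^{*}}'(r)=0$ on $(0,+\infty)$. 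Hence $z_{1,\tau^{*}}$ is constant, but no nonzero constant belongs to $L^2_r(\R_{+})$, yielding the desired contradiction.

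The essential ingredient is the new derivative identity \eqref{E:nouvellederivee}; with it in hand, the bulk of the proof is just a Rayleigh-quotient comparison together with the fact that $z_{1,\tau^{*}}$ is a legitimate test function for $\dgN_0(\tau^{*})$. The most delicate point is the rigidity step producing strict inequality: one must observe that the structural discrepancy between the 3D axisymmetric operator and its 2D de Gennes analogue reduces to a purely first-order term, so that any shared eigenfunction sharing the same eigenvalue is constrained to be constant, which is ruled out by the weighted $L^2$-condition. The membership $z_{1,\tau^{*}}\in\dom(\dgN_0(\tau^{*}))$ is a routine consequence of the Fuchsian analysis of Proposition~\ref{P:Fuchs} together with standard Agmon-type decay for eigenfunctions of operators with confining potentials.
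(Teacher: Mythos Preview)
Your proof is correct and follows essentially the same route as the paper: evaluate \eqref{E:nouvellederivee} at a minimizer $\tau^{*}$ of $\zeta_1$, apply the min-max principle for $\dgN_0(\tau^{*})$ to get $\Theta_0\le\mu_1^{\rm N}(\tau^{*})\le\Xi_0$, and rule out equality by observing that a common eigenfunction would have to satisfy $z_{1,\tau^{*}}'=0$. Your write-up is slightly more explicit than the paper about why $z_{1,\tau^{*}}\in\dom(\dgN_0(\tau^{*}))$ and why a constant function is inadmissible, but the argument is the same.
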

\begin{proof}
Let $\tau^{*}$ be a point such that $\zeta_{1}(\tau^{*})=\Xi_{0}$ (see Proposition \ref{P:probatteint}) and $z_{\tau^{*}}:=z_{1,\tau^{*}}$ an associated eigenfunction such that $\|z_{\tau^{*}}\|_{L^2_{r}(\R_{+})}=1$. We have $\zeta_{1}'(\tau^{*})=0$ and thanks to Proposition \ref{P:nouvelleformule}: 
$$\zeta_{1}(\tau^{*})=\frac{\langle \dgN_{0}({\tau^{*})z_{\tau^{*}},z_{\tau^{*}} \rangle_{L^2(\R_{+})}}}{\|z_{\tau^{*}}\|^2_{L^2}(\R_{+})} \ . $$
We deduce from the min-max principle that $\zeta_{1}(\tau^{*}) \geq \mudgN_{1}(\tau^{*})$. Let us suppose that we have the equality $\zeta_{1}(\tau^{*})=\mudgN_{1}(\tau^{*})$, then $z_{\tau^{*}}$ is a minimizer of the quadratic form associated to $\dgN_{0}(\tau^{*})$ and since $z_{\tau^{*}}$ satisfies the Neumann boundary condition \eqref{E:vpneumann}, it is an eigenfunction of $\dgN_{0}(\tau^{*})$ associated to $\mudgN_{1}(\tau^{*})$ and it satisfies $\dgN_{0}(\tau^{*})z_{\tau^{*}}=\mudgN_{1}(\tau^{*})z_{\tau^{*}}$, that is
$$\forall r>0, \quad -z_{\tau^{*}}''(r)+(r-\tau^{*})^2z_{\tau^{*}}(r)=\mudgN_{1}(\tau^{*})z_{\tau^{*}}(r) \ . $$ Combining this with \eqref{equation-de-Fuchs} we get $z_{\tau^{*}}'=0$ on $\R_{+}$, that is absurd. Therefore we have $\Xi_{0}=\zeta_{1}(\tau^{*})>\mudgN_{1}(\tau^{*})\geq\Theta_{0}$ .
\end{proof}

\subsection{A criterion for the characterization of the minimum}
In \cite[Section 4]{Yaf08}, the author states the question of knowing how many minima has the band function $\tau\mapsto \zeta_{n}(\tau)$. We give here a criterion in order to characterize the critical points of $\zeta_{1}$. Numerical simulations show that this criterion seems to be satisfied. Let us notice that most of the technics presented here can be found in \cite{HePer10}, \cite{HeKo09} and \cite{He10Mong}. In the following we denote by $z_{\tau}$ a normalized eigenfunction of $\Lunid(\tau)$ associated to $\zeta_{1}(\tau)$. Since $\zeta_{1}(\tau)$ is simple, $\tau\mapsto z_{\tau}$ is analytic and we denote by $\dot{z}_{\tau}:=\partial_{\tau}z_{\tau} \ . $
\begin{lem}
\label{L:thespeclambda2}
We have 
$$\forall \tau \in \R , \quad \|\dot{z}_{\tau} \|_{L_{r}^2(\Rp)} \leq \frac{2}{\zeta_{2}(\tau)-\zeta_{1}(\tau)}\|(r-\tau)z_{\tau}\|_{L^2_{r}(\Rp)} \ . $$
\end{lem}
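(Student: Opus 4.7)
The plan is to differentiate the eigenvalue equation $\Lunid(\tau) z_{\tau} = \zeta_{1}(\tau) z_{\tau}$ with respect to $\tau$ and to exploit the spectral gap $\zeta_{2}(\tau)-\zeta_{1}(\tau)$ on the orthogonal complement of $z_{\tau}$. Since the form domain of $\Lunid(\tau)$ does not depend on $\tau$ and the eigenvalue $\zeta_{1}(\tau)$ is simple (Proposition \ref{P:Fuchs}), Kato's theory guarantees that $\tau \mapsto z_{\tau}$ is analytic with values in $\dom(\Lunid(\tau))$, and the derivative of the operator family reduces to the derivative of the potential: $\partial_{\tau}\Lunid(\tau) = -2(r-\tau)$.

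First, I would use the normalization $\|z_{\tau}\|_{L^2_{r}(\Rp)}=1$. Differentiating this identity and choosing the standard real-analytic phase, one obtains the orthogonality
\begin{equation*}
\langle \dot{z}_{\tau}, z_{\tau} \rangle_{L^2_{r}(\Rp)} = 0,
\end{equation*}
so that $\dot{z}_{\tau}$ lies in the orthogonal complement $\{z_{\tau}\}^{\perp}$ in $L^2_{r}(\Rp)$.

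Second, differentiating the eigenvalue equation in $\tau$ yields
\begin{equation*}
(\Lunid(\tau) - \zeta_{1}(\tau))\, \dot{z}_{\tau} = 2(r-\tau)\, z_{\tau} + \zeta_{1}'(\tau)\, z_{\tau}.
\end{equation*}
Let $P^{\perp}$ be the orthogonal projector of $L^2_{r}(\Rp)$ onto $\{z_{\tau}\}^{\perp}$. Since $z_{\tau}$ is an eigenvector, the operator $\Lunid(\tau)-\zeta_{1}(\tau)$ commutes with $P^{\perp}$, and because $\dot{z}_{\tau} \in \{z_{\tau}\}^{\perp}$, applying $P^{\perp}$ eliminates the $\zeta_{1}'(\tau) z_{\tau}$ term and gives
\begin{equation*}
(\Lunid(\tau) - \zeta_{1}(\tau))\, \dot{z}_{\tau} = 2\, P^{\perp}\bigl[(r-\tau)\, z_{\tau}\bigr].
\end{equation*}

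Third, I would invoke the spectral gap. By the min-max principle, the restriction of the self-adjoint operator $\Lunid(\tau)-\zeta_{1}(\tau)$ to $\{z_{\tau}\}^{\perp}$ is bounded below by $\zeta_{2}(\tau)-\zeta_{1}(\tau)>0$, hence invertible there with inverse of norm $\bigl(\zeta_{2}(\tau)-\zeta_{1}(\tau)\bigr)^{-1}$. Using that $P^{\perp}$ is a contraction, we conclude
\begin{equation*}
\|\dot{z}_{\tau}\|_{L^2_{r}(\Rp)} \leq \frac{2}{\zeta_{2}(\tau)-\zeta_{1}(\tau)}\, \|P^{\perp}[(r-\tau)z_{\tau}]\|_{L^2_{r}(\Rp)} \leq \frac{2}{\zeta_{2}(\tau)-\zeta_{1}(\tau)}\, \|(r-\tau)z_{\tau}\|_{L^2_{r}(\Rp)},
\end{equation*}
which is exactly the stated inequality. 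No real obstacle is expected: the only subtle point is the justification that $\dot{z}_{\tau}$ lies in the domain of $\Lunid(\tau)$ so that one may apply the operator to it, but this follows from the analyticity of the eigenprojector combined with the fact that the form domain $B^{1}_{r}(\Rp)$ is $\tau$-independent.
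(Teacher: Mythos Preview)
Your proof is correct and follows essentially the same approach as the paper: establish $\dot z_\tau\perp z_\tau$ from the normalization, differentiate the eigenvalue equation, and use the spectral gap on $\{z_\tau\}^\perp$. The only cosmetic difference is that the paper uses the quadratic form lower bound $(\zeta_2-\zeta_1)\|\dot z_\tau\|^2\le\langle(\Lunid(\tau)-\zeta_1(\tau))\dot z_\tau,\dot z_\tau\rangle$ together with Cauchy--Schwarz, whereas you phrase it via the norm of the inverse of $\Lunid(\tau)-\zeta_1(\tau)$ on $\{z_\tau\}^\perp$ after projecting out the $\zeta_1'(\tau)z_\tau$ term; the two formulations are equivalent.
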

\begin{proof}
We differentiate $\| z_{\tau} \|_{L^2_{r}(\Rp)}^2=1$ with respect to $\tau$ and we get that $\dot{z}_{\tau}$ is orthogonal to $z_{\tau}$ in $L^2_{r}(\Rp)$. We deduce from the min-max principle:
$$(\zeta_{2}(\tau)-\zeta_{1}(\tau))\|\dot{z}_{\tau} \|_{L_{r}^2(\Rp)}^2 \leq \langle (\Lunid(\tau)-\zeta_{1}(\tau))\dot{z}_{\tau},\dot{z}_{\tau}\rangle_{L^2_{r}(\Rp)} . $$
We differentiate $\Lunid(\tau)z_{\tau}=\zeta_{1}(\tau)z_{\tau}$ with respect to $\tau$ and we get $(\Lunid(\tau)-\zeta_{1}(\tau))\dot{z}_{\tau}=-\partial_{\tau}\Lunid(\tau)z_{\tau}$, therefore
$$(\zeta_{2}(\tau)-\zeta_{1}(\tau))\|\dot{z}_{\tau} \|_{L_{r}^2(\Rp)}^2 \leq \langle -\partial_{\tau}\Lunid(\tau)z_{\tau},\dot{z}_{\tau}\rangle_{L^2_{r}(\Rp)} \ . $$
By using Cauchy-Schwarz inequality and the identity $\partial_{\tau}\Lunid(\tau)=-2(r-\tau)$ we deduce the Lemma.
\end{proof}

\begin{lem}[Viriel identity]
\label{L:Viriel}
Let $\tau_{\rm C}$ be a critical point of $\zeta_{1}$. Then we have 
\begin{equation}
\label{E:viriel}
\int_{\R_{+}}r |z_{\tau_{\rm C}}'(r)|^2\d r =\int_{\R_{+}}(r-\tau_{\rm C})^2|z_{\tau_{\rm C}}(r)|^2r \d r= \frac{\zeta_{1}(\tau_{\rm C})}{2} \ .
\end{equation}
\end{lem}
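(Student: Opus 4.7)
The plan is to invoke a virial (scaling) argument on the quadratic form $\gq_\tau$, combined with the Feynman-Hellmann formula \eqref{E:FH}, to show equipartition of the kinetic and potential contributions to $\zeta_1(\tau_{\rm C})$ at any critical point.

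First I would introduce the one-parameter family of dilations $(U_s u)(r) := e^s u(e^s r)$, which the change of variable $\rho = e^s r$ shows is unitary on $L^2_r(\R_{+})$ and preserves the form domain $B^1_r(\R_{+})$ together with the boundary condition $(ru'(r))_{|r=0}=0$. Applied to the normalized eigenfunction $z_\tau := z_{1,\tau}$, the same change of variable in the two integrals defining $\gq_\tau$ yields
\begin{equation*}
f(s) := \gq_\tau(U_s z_\tau) = e^{2s}\int_{\R_{+}} r\, |z_\tau'(r)|^2 \d r + e^{-2s}\int_{\R_{+}} (r - e^s\tau)^2\, z_\tau(r)^2\, r \d r.
\end{equation*}
Since $\|U_s z_\tau\|_{L^2_r(\R_{+})} = 1$ for all $s$ and $f(0) = \gq_\tau(z_\tau) = \zeta_1(\tau)$ equals the minimum of the Rayleigh quotient, $s=0$ is a minimum of $f$; hence $f'(0) = 0$. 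Explicit differentiation yields the virial identity
\begin{equation*}
\int_{\R_{+}} r\, |z_\tau'|^2 \d r - \int_{\R_{+}} (r-\tau)^2\, z_\tau^2\, r \d r = \tau \int_{\R_{+}} (r-\tau)\, z_\tau^2\, r \d r,
\end{equation*}
valid for every $\tau\in\R$.

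By Feynman-Hellmann, the right-hand side equals $-\tfrac{\tau}{2}\zeta_1'(\tau)$ and thus vanishes at a critical point $\tau_{\rm C}$. The two integrals on the left therefore coincide at $\tau=\tau_{\rm C}$; their sum reconstructs $\gq_{\tau_{\rm C}}(z_{\tau_{\rm C}}) = \zeta_1(\tau_{\rm C})$, so each equals $\zeta_1(\tau_{\rm C})/2$, which is precisely \eqref{E:viriel}.

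I do not expect any substantial obstacle: the only technical items are the weighted change of variable and the verification that $U_s$ maps $\dom(\gq_\tau)$ into itself, both elementary once one tracks the $e^{\pm s}$ factors. An alternative route, should the dilation bookkeeping be deemed cumbersome, is to multiply the eigenvalue equation \eqref{equation-de-Fuchs} by $r^2 z_{\tau_{\rm C}}'$ and integrate by parts on $\R_{+}$; the boundary terms vanish thanks to Proposition \ref{P:Fuchs} and the Gaussian-type decay at infinity inherited from the potential $(r-\tau)^2$, and one recovers the same identity after applying the critical-point relation $\int(r-\tau_{\rm C}) z_{\tau_{\rm C}}^2\, r\d r = 0$.
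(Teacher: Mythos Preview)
Your proof is correct and follows essentially the same virial-by-scaling argument as the paper: the paper conjugates by the dilation to obtain the scaled operator $\Lunid(\tau,a)=-a^{-2}r^{-1}\partial_r r\partial_r+(ar-\tau)^2$ and differentiates the eigenvalue equation in $a$, while you equivalently differentiate the quadratic form along the unitary dilation group $U_s$; both yield the same identity $\int r|z_\tau'|^2\,r\,\d r=\int r(r-\tau)|z_\tau|^2\,r\,\d r$, after which the use of \eqref{E:FH} at the critical point is identical.
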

\begin{proof}
  We introduce the scaled operator 
 $$\Lunid(\tau, a):= -a^{-2}\frac{1}{r}\partial_{r}r \partial_{r}+(a r-\tau)^2, \quad a>0 $$
 which is unitary equivalent to $\Lunid(\tau)$. We denote by $z_{\tau}^{a}(r):=z_{\tau}(\frac{r}{a})$ and we have
 $$\forall a>0, \quad \left(\Lunid(\tau, a)-\zeta_{1}(\tau)\right)z_{\tau}^{a}=0 \ . $$ 
 We differentiate this relation with respect to $a$:
 \begin{equation}
\label{E:diffunedeplusc6}
\left(\Lunid(\tau, a)-\zeta_{1}(\tau)\right)\partial_{a}z_{\tau}^{a}+\partial_{a}\Lunid(\tau, a)z_{\tau}^{a}=0  
\end{equation}
 with $$\partial_{a}\Lunid(\tau, a)=2a^{-3}\frac{1}{r}\partial_{r}r\partial_{r}+2r(a r-\tau) \ . $$ We make the scalar product of \eqref{E:diffunedeplusc6} with $z_{\tau}^{a}$ in $L^2_{r}(\R_{+})$ and we take $a=1$: 
\begin{equation}
\label{Id:Viriel1dsing}
\int_{\Rp}\left(-2 |z_{\tau}'(r)|^2+2r(r-\tau)|z_{\tau}(r)|^2\right)r \d r=0 \ .
\end{equation}
Thanks to \eqref{E:FH}, if $\tau_{\rm C}$ is a critical point of $\zeta_{1}$ we have 
$$\int_{\Rp}(r-\tau_{\rm C})|z_{\tau_{\rm C}}(r)|^2 r \d r =0$$
and therefore
\begin{equation}
\label{Id:FHtauC1dsing}
\int_{\Rp}(r-\tau_{\rm C})^2|z_{\tau_{\rm C}}(r)|^2 r \d r =\int_{\Rp}r(r-\tau_{\rm C})|z_{\tau_{\rm C}}(r)|^2 r \d r \ .
\end{equation}
Since $$\forall \tau \in \R, \quad \int_{\Rp} \left(|z_{\tau}'(r)|^2+(r-\tau)^2|z_{\tau}(r)|^2\right)r \d r= \zeta_{1}(\tau), $$
by using \eqref{Id:Viriel1dsing} and \eqref{Id:FHtauC1dsing} we get \eqref{E:viriel}.
\end{proof}
We can now state our criterion: if the spectral gap is large enough in a critical point of $\zeta_{1}$, this critical point is a non-degenerate minimum:
\begin{prop}
\label{P:spectralgap}
Let $\tau_{\rm C}$ be a critical point of $\zeta_{1}$. Then we have 
\begin{equation}
\label{E:specgapimplycritical}
\zeta_{1}''(\tau_{\rm C}) \geq 2\frac{\zeta_{2}(\tau_{\rm C})-3\zeta_{1}(\tau_{\rm C})}{\zeta_{2}(\tau_{\rm C})-\zeta_{1}(\tau_{\rm C})} \ . 
\end{equation}
\end{prop}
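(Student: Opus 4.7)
The plan is to derive a two-term expression for $\zeta_1''(\tau)$ at a critical point that can be estimated by combining the two preparatory lemmas. First I would differentiate the eigenvalue equation $\Lunid(\tau)z_\tau=\zeta_1(\tau)z_\tau$ twice with respect to $\tau$. Setting $\Lunid'(\tau):=\partial_\tau\Lunid(\tau)=-2(r-\tau)$ and noting that $\Lunid''(\tau)=2$, and then taking the $L^2_r(\R_+)$ inner product with $z_\tau$ (using self-adjointness of $\Lunid(\tau)$ and the identity $\langle\dot z_\tau,z_\tau\rangle=0$ coming from $\|z_\tau\|=1$), I obtain
\begin{equation*}
\zeta_1''(\tau)=2-4\langle(r-\tau)\dot z_\tau,z_\tau\rangle_{L^2_r(\R_+)}.
\end{equation*}

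Next I would exploit a second consequence of the first differentiation: $(\Lunid(\tau)-\zeta_1(\tau))\dot z_\tau=\zeta_1'(\tau)z_\tau+2(r-\tau)z_\tau$. At a critical point $\tau_{\rm C}$ the first term drops out, so pairing with $\dot z_\tau$ in $L^2_r(\R_+)$ gives
\begin{equation*}
\langle(\Lunid(\tau_{\rm C})-\zeta_1(\tau_{\rm C}))\dot z_{\tau_{\rm C}},\dot z_{\tau_{\rm C}}\rangle_{L^2_r(\R_+)}=2\langle(r-\tau_{\rm C})z_{\tau_{\rm C}},\dot z_{\tau_{\rm C}}\rangle_{L^2_r(\R_+)}.
\end{equation*}
Combining the two displayed identities, I get the convenient reformulation
\begin{equation*}
\zeta_1''(\tau_{\rm C})=2-2\langle(\Lunid(\tau_{\rm C})-\zeta_1(\tau_{\rm C}))\dot z_{\tau_{\rm C}},\dot z_{\tau_{\rm C}}\rangle_{L^2_r(\R_+)}.
\end{equation*}

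Now the lower bound is obtained by estimating the scalar product from above. Apply Cauchy--Schwarz to $\langle(r-\tau_{\rm C})z_{\tau_{\rm C}},\dot z_{\tau_{\rm C}}\rangle$, then plug in Lemma~\ref{L:thespeclambda2} to bound $\|\dot z_{\tau_{\rm C}}\|_{L^2_r(\R_+)}$ by $\tfrac{2}{\zeta_2(\tau_{\rm C})-\zeta_1(\tau_{\rm C})}\|(r-\tau_{\rm C})z_{\tau_{\rm C}}\|_{L^2_r(\R_+)}$. This yields
\begin{equation*}
\langle(\Lunid(\tau_{\rm C})-\zeta_1(\tau_{\rm C}))\dot z_{\tau_{\rm C}},\dot z_{\tau_{\rm C}}\rangle_{L^2_r(\R_+)}\le\frac{4}{\zeta_2(\tau_{\rm C})-\zeta_1(\tau_{\rm C})}\|(r-\tau_{\rm C})z_{\tau_{\rm C}}\|_{L^2_r(\R_+)}^2.
\end{equation*}
Finally the Viriel identity of Lemma~\ref{L:Viriel} replaces $\|(r-\tau_{\rm C})z_{\tau_{\rm C}}\|_{L^2_r(\R_+)}^2$ by $\zeta_1(\tau_{\rm C})/2$. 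Substituting everything back into the expression for $\zeta_1''(\tau_{\rm C})$ and simplifying produces precisely~\eqref{E:specgapimplycritical}.

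I do not expect a genuine obstacle here: the two lemmas have been tailored for exactly this calculation. The only subtle point to check carefully is the bookkeeping in the second differentiation of the eigenvalue equation (making sure the weighted inner product is used consistently and that the term $\zeta_1'\langle\dot z_\tau,z_\tau\rangle$ really vanishes because of the normalization $\|z_\tau\|_{L^2_r(\R_+)}=1$). Analyticity of $\tau\mapsto(\zeta_1(\tau),z_\tau)$, already invoked in the proof of Proposition~\ref{P:nouvelleformule}, guarantees that $\dot z_\tau$ and $\ddot z_\tau$ are well-defined elements of the form domain, so no regularity issue arises.
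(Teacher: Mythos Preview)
Your proposal is correct and follows essentially the same route as the paper: differentiate the Feynman--Hellmann identity to get $\zeta_1''(\tau)=2-4\langle(r-\tau)\dot z_\tau,z_\tau\rangle_{L^2_r}$, then bound the cross term via Cauchy--Schwarz together with Lemma~\ref{L:thespeclambda2}, and finally insert the Viriel identity of Lemma~\ref{L:Viriel}. Your intermediate reformulation $\zeta_1''(\tau_{\rm C})=2-2\langle(\Lunid(\tau_{\rm C})-\zeta_1(\tau_{\rm C}))\dot z_{\tau_{\rm C}},\dot z_{\tau_{\rm C}}\rangle$ is a harmless detour (you end up estimating the same inner product $\langle(r-\tau_{\rm C})z_{\tau_{\rm C}},\dot z_{\tau_{\rm C}}\rangle$ anyway), though it does make transparent that the correction term is nonnegative.
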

\begin{proof}
We first differentiate the Feynman-Hellmann relation \eqref{E:FH} and we get 
$$\forall \tau \in \R, \quad \zeta_{1}''(\tau)=2-4\int_{\R_{+}}(r-\tau)\dot{z}_{\tau}(r)z_{\tau}(r) r \d r \ . $$
The Cauchy-Schwarz inequality and the Lemma \ref{L:thespeclambda2} provide
$$\zeta_{1}''(\tau) \geq 2-\frac{8\|(r-\tau)z_{\tau}\|^2_{L^2_{r}(\R_{+})}}{\zeta_{2}(\tau)-\zeta_{1}(\tau)} \ . $$
If $\tau_{\rm C}$ is a critical point of $\zeta_{1}$, we deduce \eqref{E:specgapimplycritical} from the Lemma \ref{L:Viriel}.
\end{proof}

We know that $\zeta_{2}(0)-3\zeta_{1}(0)=0$ and that $\zeta_{2}-3\zeta_{1}$ goes to 0 for $\tau$ large (see Proposition \ref{P:limite}). Moreover numerical simulations show that $\zeta_{2}-3\zeta_{1}$ seems to be positive on $(0,+\infty)$, see figure \ref{F:2}. We already know that $\zeta_{1}$ is non-increasing on $(-\infty,0)$,  therefore using Proposition \ref{P:spectralgap} we believe that all the critical points of $\zeta_{1}$ are minima. Therefore we are led to make the following: 
\begin{conj}
\label{C:}
The band function $\tau\mapsto\zeta_{1}(\tau)$ has a unique and non-degenerate minimum.
\end{conj}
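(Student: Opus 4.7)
My plan would be to reduce the conjecture to establishing a single inequality, namely
\[
\zeta_{2}(\tau)-3\zeta_{1}(\tau)>0 \quad \text{for every } \tau>0, \tag{$\ast$}
\]
which the author already highlights as the numerically observed fact behind the conjecture. Granted ($\ast$), Proposition \ref{P:spectralgap} forces $\zeta_{1}''(\tau_{\rm C})>0$ at every critical point $\tau_{\rm C}>0$ of $\zeta_{1}$. Combined with the Feynman--Hellmann formula \eqref{E:FH}, which gives $\zeta_{1}'(\tau)<0$ for all $\tau\leq0$ (the integrand $(r-\tau)z_{1,\tau}^{2}r$ is strictly positive there), this tells us that every critical point of $\zeta_{1}$ is a strictly positive, non-degenerate local minimum. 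Since a smooth function cannot have two consecutive non-degenerate local minima without an intervening critical point (which would have to be a local maximum, contradicting $\zeta_{1}''>0$ again), at most one critical point can exist. The existence of at least one critical point is already guaranteed by Proposition \ref{P:probatteint} together with the limits $\zeta_{1}(\tau)\to+\infty$ as $\tau\to-\infty$ and $\zeta_{1}(\tau)\to 1$ as $\tau\to+\infty$. Hence $\zeta_{1}$ has exactly one minimum, non-degenerate, and uniqueness plus non-degeneracy are proved.

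\textbf{Attacking $(\ast)$.} I would split $(0,+\infty)$ into three regimes. Near $\tau=0$, we have $\zeta_{n}(0)=4n-2$, so both sides agree: $\zeta_{2}(0)-3\zeta_{1}(0)=0$. Using analytic perturbation theory around the Laguerre basis (the eigenfunctions of $\Lunid(0)$ are known explicitly) together with the Feynman--Hellmann formula \eqref{E:FH}, I would compute the Taylor expansions of $\zeta_{1}$ and $\zeta_{2}$ at $0$ and show that the first non-vanishing derivative of $\zeta_{2}-3\zeta_{1}$ is strictly positive; this would provide positivity on some right-neighbourhood of $0$. For large $\tau$, Proposition \ref{P:asymptoticzeta} gives
\[
\zeta_{2}(\tau)-3\zeta_{1}(\tau)=(3-3)-\tfrac{1}{4\tau^{2}}+\tfrac{3}{4\tau^{2}}+O(\tau^{-3})=\tfrac{1}{2\tau^{2}}+O(\tau^{-3})>0
\]
for $\tau$ sufficiently large. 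For the intermediate regime, I would attempt to establish a monotonicity or convexity property of $\tau\mapsto \zeta_{2}(\tau)-3\zeta_{1}(\tau)$, for instance by differentiating the Feynman--Hellmann identity twice and comparing the resulting weighted integrals of $z_{1,\tau}$ and $z_{2,\tau}$, or by exploiting a Sturm-type comparison between the operators $\Lunid(\tau)$ and a rescaled version of $\Lunid(0)$ through the unitary transform used in the proof of Proposition \ref{P:asymptoticzeta}.

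\textbf{Main obstacle.} The genuine difficulty is the intermediate regime. The function $\zeta_{2}-3\zeta_{1}$ vanishes at both ends of $(0,+\infty)$, so neither the perturbative expansion at $0$ nor the asymptotic expansion at $+\infty$ is able to control its sign globally; any valid proof must involve a truly global argument, not a matching of two asymptotic analyses. This is precisely why the author leaves the statement as a conjecture and provides only numerical evidence in the annex: ruling out a zero (or even a sign change) of $\zeta_{2}-3\zeta_{1}$ somewhere deep inside $(0,+\infty)$ requires either a structural identity I have not been able to locate, or a comparison principle between the first two band functions that seems to call for sharper information about $z_{2,\tau}$ (which, unlike $z_{1,\tau}$, has a node whose position depends delicately on $\tau$). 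Absent such an identity, the cleanest outcome I can honestly claim is a \emph{conditional} theorem: $(\ast) \Rightarrow$ Conjecture \ref{C:}.
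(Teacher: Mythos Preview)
The statement is a \emph{conjecture}; the paper does not prove it but motivates it by exactly the conditional reduction you describe: if $\zeta_{2}(\tau)-3\zeta_{1}(\tau)>0$ on $(0,+\infty)$, then Proposition~\ref{P:spectralgap} forces every critical point of $\zeta_{1}$ to be a non-degenerate minimum, and combined with $\zeta_{1}'<0$ on $(-\infty,0]$ this yields uniqueness. Your proposal reproduces this reasoning faithfully and, with the large-$\tau$ sign computation via Proposition~\ref{P:asymptoticzeta}, goes marginally further than the paper's text; your honest identification of the intermediate-regime obstacle as the genuine gap is exactly why the author leaves it open and resorts to the numerics of Figure~\ref{F:2}.
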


\begin{rem}
Let us notice that similar conjectures can be found in the litterature: in \cite{PopRay12}, the characterization of the minimum of the band function of a related model problem would bring localization property for the semi-classical Laplacian of a domain with a curved edge. In \cite{Yaf03} the author makes a conjecture on the monotonicity of the derivative of a band function associated to a magnetic hamiltonian in $\R^3$.
\end{rem}

Using the technics from \cite[Section 3]{Yaf03} and \cite[Section 5]{Yaf08}, the conjecture \ref{C:} can bring scattering properties for the hamiltonian $H_{\mathcal{A}}$. Moreover if the conjecture \ref{C:} is true, we will be able to describe the number of eigenstates created under the action of a suitable electric perturbation (see \cite{BriRaiSoc08}). We hope to continue these investigations in the future. 
\paragraph{Acknowledgements} The author is grateful to E. Soccorsi for giving the physical impulsion, for his interest in this work and for precious advices. The author is also grateful to V. Bruneau for helpful discussions.

\newpage
\appendix
\section{Numerical approximations}
\label{S:num}
The numerical approximations described here use the finite element library M\'elina (\cite{Melina++}). We refer to \cite[Subsection 6.2.4]{Popoff} for more simulations and computation details. We denote by $\breve{\zeta_{n}}(\tau)$ a numerical approximation of $\zeta_{n}(\tau)$. The figure \ref{F:1} presents the numerical approximations $\breve{\zeta_{1}}(\tau)$ for $\tau=\frac{k}{100}$ with $0 \leq k \leq 500$. The numerical approximations have a unique minimum $\breve{\Xi}_{0}=0.8630$ and the corresponding minimizing Fourier parameter is $\breve{\tau}^{*}=1.53$ . We have also plotted the constant $\Theta_{0}\approx 0.5901$ according to the computation of \cite{Bo08} and the upper bound $\sqrt{4-\pi}\approx 0.9265$ given by Proposition \ref{P:probatteint}.

The figure \ref{F:2} presents the numerical approximation $\breve{\zeta_{2}}(\tau)-3\breve{\zeta_{1}}(\tau)$ for $\tau=\frac{k}{100}$ with $0 \leq k \leq 500$. These quantities are positive for $\tau>0$, therefore we think that $\zeta_{2}(\tau)-3\zeta_{1}(\tau)$ is positive for all $\tau>0$. Using Proposition \ref{P:spectralgap}, we believe that the conjecture \ref{C:} is true.
\begin{figure}[ht]
\begin{center}
\includegraphics[keepaspectratio=true,width=15cm]{./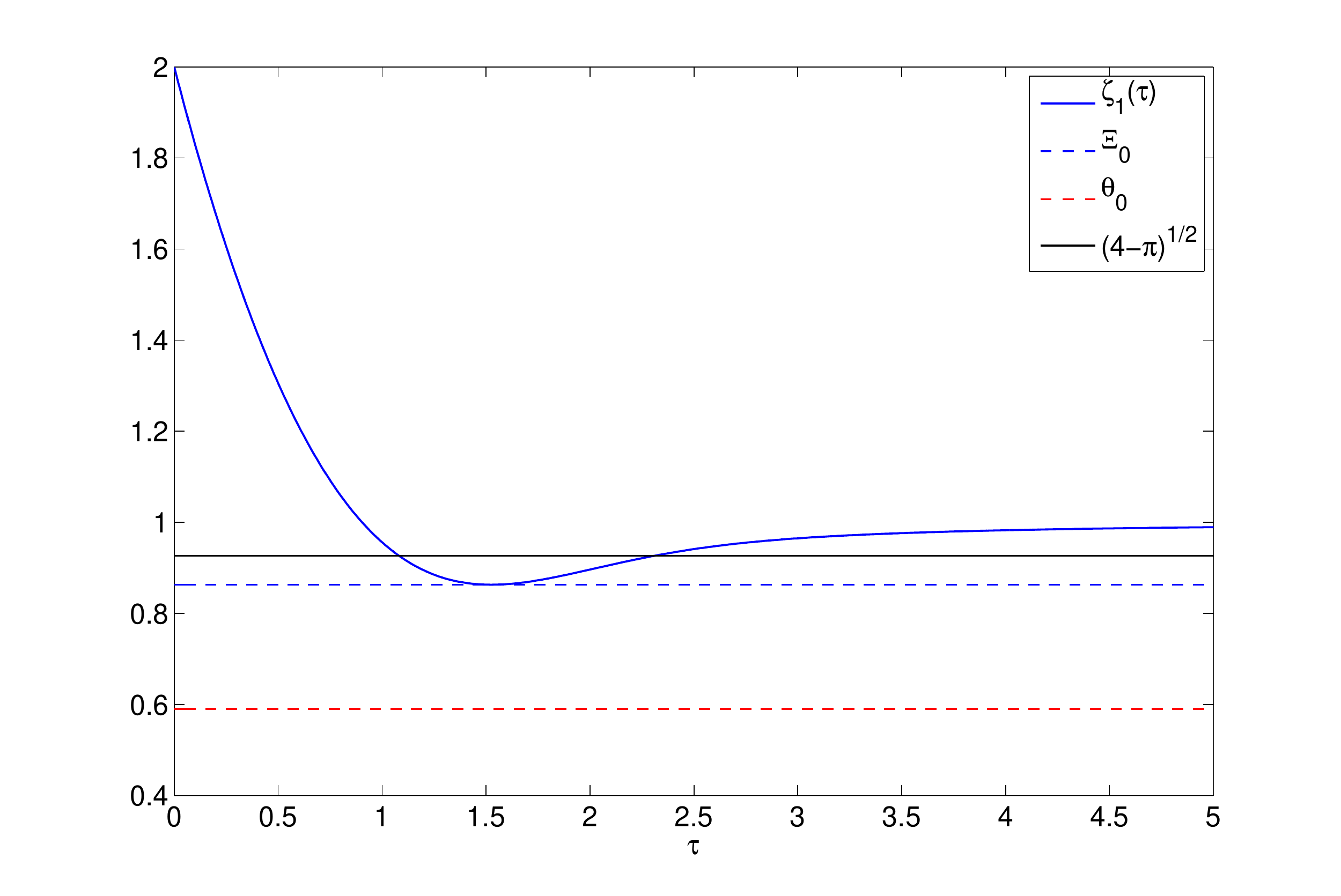}
\caption{The numerical approximations $\breve{\zeta_{1}}(\tau)$ for $\tau=\frac{k}{100}$ with $0 \leq k \leq 500$ compared to the constant $\Theta_{0}$ and the upper bound $\sqrt{4-\pi}$.}
\label{F:1}
\end{center}
\end{figure}

\begin{figure}[ht!]
\begin{center}
\includegraphics[keepaspectratio=true,width=15cm]{./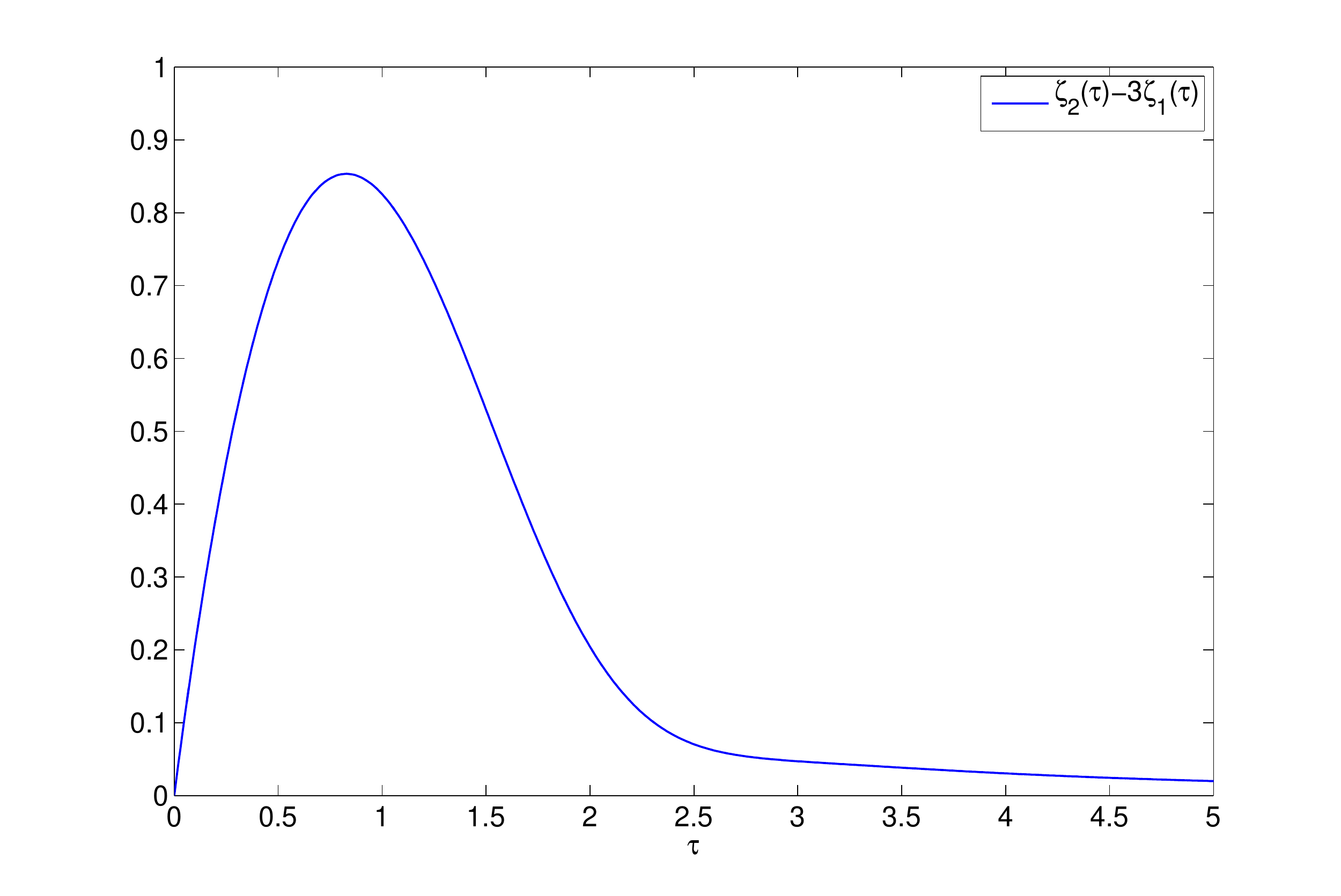}
\caption{The numerical approximations $\breve{\zeta_{2}}(\tau)-3\breve{\zeta_{1}}(\tau)$ for $\tau=\frac{k}{100}$ with $0 \leq k \leq 500$.}
\label{F:2}
\end{center}
\end{figure}


\newpage
\bibliographystyle{mnachrn}
\bibliography{biblio}
\end{document}